\newtheorem{Th}{Theorem}[section]
\newtheorem{Lem}[Th]{Lemma}
\newtheorem{Rem}[Th]{Remark}
\newenvironment{altproof}[1]
{\noindent
{\em Proof of {#1}}.}
{\nopagebreak\mbox{}\hfill $\Box$\par\addvspace{0.5cm}}
    \newcommand{\Rmnum}[1]{\expandafter\@slowromancap\romannumeral #1@}
   \def\Z{\mathbb{Z}}
   \def\R{\mathbb{R}}
   \def\J{\mathcal{J}}
\newcommand{\cC}{{\mathcal C}}
\newcommand{\cF}{{\mathcal F}}
\newcommand{\cI}{{\mathcal I}}
\newcommand{\cJ}{{\mathcal J}}
\newcommand{\cN}{{\mathcal N}}
\newcommand{\Ga}{\Gamma}
\numberwithin{equation}{section}
\DeclareMathOperator*{\essinf}{ess\,inf}
\begin{document}

\title{Solutions of the fractional Schr\"{o}dinger equation with sign-changing nonlinearity}

\author{Bartosz Bieganowski}
\date{}

\maketitle
\pagestyle{myheadings} \markboth{\underline{B. Bieganowski}}{
\underline{Solutions of the NLFS equation with sign-changing nonlinearity}}

\begin{abstract} We look for solutions to a nonlinear, fractional Schr\"odinger equation
$$(-\Delta)^{\alpha / 2} u + V(x)u = f(x,u)-\Gamma(x)|u|^{q-2}u\hbox{ on }\R^N,$$
where potential $V$ is coercive or $V=V_{per} + V_{loc}$ is a sum of periodic in $x$ potential $V_{per}$ and localized potential $V_{loc}$, $\Gamma\in L^{\infty}(\R^N)$ is periodic in $x$, $\Gamma(x)\geq 0$ for a.e. $x\in\R^N$ and $2<q<2^*_\alpha$. If $f$ has the subcritical growth, but higher than $\Gamma(x)|u|^{q-2}u$, then we find a ground state solution being a minimizer on the Nehari manifold. Moreover we show that if $f$ is odd in $u$ and $V$ is periodic, this equation admits infinitely many solutions, which are pairwise geometrically distinct. Finally, we obtain the existence result in the case of coercive potential $V$.
\end{abstract}

\vspace{0.2cm}
{\bf MSC 2010:} Primary: 35Q55; Secondary: 35A15, 35J20, 58E05 

{\bf Keywords:} ground state, variational methods, Nehari manifold, fractional Schr\"odinger equation, periodic and localized potentials, coercive potential, sign-changing nonlinearity.

\section*{Introduction}
\setcounter{section}{1}

We consider the following nonlinear, fractional Schr\"odinger equation
\begin{equation}
\label{eq}
(-\Delta)^{\alpha / 2} u + V(x)u = f(x,u)-\Gamma(x)|u|^{q-2}u\hbox{ on }\R^N,\;\alpha \in (0,2],\; N > \alpha,
\end{equation}
with $u\in H^{\alpha / 2}(\R^N)$, which appears in different areas of mathematical physics. Recently, the fractional Schr\"odinger equation has been introduced to describe the propagation dynamics of wave packets in the presence of the harmonic potential and also of the free particle (see \cite{ZhangLiu, ZhangZhong}). In \cite{Longhi} has been proposed an optical realization of this equation, based on transverse light dynamics in aspherical optical cavities. The case of a linear potential is also a fundamental problem in
quantum mechanics that can be treated and solved analytically (see \cite{Guedes, Robinett}). Such an equation was also studied in the quantum scattering problem (see \cite{GuoXu}).

The fractional Laplacian $(-\Delta)^{\alpha / 2}$ of a function $\psi : \R^N \rightarrow \R$ is defined by the Fourier transform by the formula
$$
\cF \left( (-\Delta)^{\alpha / 2} \psi \right) (\xi) := | \xi |^\alpha \hat{\psi} (\xi),
$$
where
$$
\cF \psi (\xi) := \hat{\psi} (\xi) := \frac{1}{(2\pi)^{N/2}} \int_{\R^N} e^{-2\pi i \xi \cdot x} \psi(x) \, dx
$$
denotes the usual Fourier transform. When $\psi : \R^N \rightarrow \R$ is smooth enough, it can be defined by the principal value of the singular integral
$$
(-\Delta)^{\alpha / 2} \psi (x) = c_{N, \alpha} P.V. \int_{\R^N} \frac{\psi(x) - \psi(y)}{|x-y|^{N+\alpha}} \, dy,
$$
where $c_{N,\alpha}$ is some normalization constant. It is known, that $(-\Delta)^{\alpha / 2}$ reduces to $-\Delta$ as $\alpha \to 2^-$ -- see \cite{DiNezza}. In this paper we identify $(-\Delta)^{\alpha / 2}$ with the classical Laplace operator $-\Delta$ for $\alpha = 2$. By the very definition we observe that the fractional Laplacian is non-local (see \cite{Cabre, DiNezza}). 

For $0 < \alpha < 2$, let us remind the definition of the fractional Sobolev space:
$$
H^{\alpha / 2} ( \R^N) := \left\{ u \in L^2 (\R^N) \ : \ \int_{\R^N} |\xi|^\alpha |\hat{u}(\xi)|^2 \,d\xi + \int_{\R^N} |u(x)|^2 \,dx < \infty \right\}.
$$
It is a Hilbert space endowed with the norm
$$
u \mapsto \sqrt{
\int_{\R^N} |\xi|^\alpha |\hat{u}(\xi)|^2 \, d\xi + \int_{\R^N} |u|^2 \, dx
}.
$$

The equation (\ref{eq}) describes the behaviour of so-called standing wave solutions $\Phi(x,t)=u(x)e^{-i\omega t}$ of the following time-dependent fractional Schr\"odinger equation
$$
i \frac{\partial \Phi}{\partial t} = (-\Delta)^{\alpha / 2} \Phi + (V(x)+\omega) \Phi - g(x, |\Phi|).
$$
Such an equation was introduced by Laskin by expanding the Feynman path integral from the Brownian-like to the L\'evy-like quantum mechanical paths (see \cite{Laskin2000, Laskin2002}). The time-dependent equation has been recently studied by A. Liemert and A. Kienle (\cite{LiemertKienle}) with the linear potential $V(x) = \beta x$.

The nonlinear term $f$ satisfies the following conditions:

\begin{itemize}
\item[(F1)] $f:\R^N\times\R\to \R$ is measurable, $\Z^N$-periodic in $x\in\R^N$ and continuous in $u\in\R$ for a.e. $x\in\R^N$, and there are $c>0$ and $2< q<p< 2^*_\alpha := \frac{2N}{N-\alpha}$ such that
$$|f(x,u)|\leq c(1+|u|^{p-1})\hbox{ for all }u \in\R,\; x\in\R^N,$$
\item[(F2)] $f(x,u)=o(|u|)$ uniformly in $x$ as $|u|\to 0^+$,
\item[(F3)] $F(x,u)/|u|^q\to\infty$ uniformly in $x$ as $|u|\to\infty$, where $F(x,u) = \int_0^u f(x,s) \, ds$ is the primitive of $f$ with respect to $u$,
\item[(F4)] $u\mapsto f(x,u)/|u|^q$ is strictly increasing on $(-\infty,0)$ and $(0,\infty)$.
\end{itemize} 

We impose on  $\Gamma$ the following condition: 
\begin{itemize}
\item[($\Gamma$)] $\Gamma\in L^{\infty}(\R^N)$ is $\Z^N$-periodic in $x\in\R^N$, $\Gamma(x)\geq 0$ for a.e. $x\in\R^N$.
\end{itemize} 

Note that the nonlinearity $(x,u) \mapsto f(x,u) - \Gamma(x) |u|^{q-2}u$ does not satisfy the Ambrosetti-Rabinowitz type condition. In fact it may be sign-changing, for example -- consider $f(x,u) = |u|^{p-1}u$ and $\Gamma \equiv 1$, where $2 < q < p < 2^*_\alpha$.

We assume that the potential $V$ satisfies:

\begin{itemize}
\item[($V_\alpha$1)] $V = V_{per} + V_{loc}$, where $V_{per} \in L^{\infty}(\R^N)$ is $\Z^N$-periodic in $x\in\R^N$ and $V_{loc} \in L^{\infty}(\R^N)$ is such that $\lim_{|x|\to\infty} V_{loc}(x) = 0$; moreover 
$$
\left\{ \begin{array}{ll}
V_0 := \essinf_{x\in\mathbb{R}^N} V(x) > 0 & \ \mathrm{for} \ 0<\alpha<2, \\
\inf \sigma (-\Delta + V(x)) > 0 & \ \mathrm{for} \ \alpha=2,
\end{array}
\right.
$$
\end{itemize} 
or
\begin{itemize}
\item[($V_\alpha$2)] $V \in \mathcal{C}(\mathbb{R}^N, \mathbb{R})$ is such that $\lim_{|x| \to \infty} V(x) = \infty$ and 
$$
V_0 := \inf_{x\in\mathbb{R}^N} V(x) > 0.
$$
\end{itemize} 

We consider a Hilbert space
$$
E^{\alpha / 2} := \left\{ u \in L^2 (\R^N) \ : \ \int_{\R^N} |\xi|^\alpha |\hat{u}(\xi)|^2 \, d\xi + \int_{\R^N} V(x) |u|^2 \, dx < \infty \right\}
$$
endowed with the following norm
$$
\|u\|^2 := \int_{\R^N} |\xi|^\alpha |\hat{u}(\xi)|^2 \, d\xi + \int_{\R^N} V(x) |u|^2 \, dx
$$
and the scalar product
$$
\langle u, v \rangle := \int_{\R^N} |\xi|^\alpha \hat{u} (\xi) \overline{\hat{v}(\xi)}\,d\xi + \int_{\R^N} V(x) u(x)v(x)\, dx.
$$

Our goal is to find a {\em ground state} of the energy functional
$\J:E^{\alpha / 2}\to\R$ of class $\cC^1$ given by
$$\J(u):= \left\{  \begin{array}{ll}
\displaystyle  \frac12\int_{\R^N} |\xi|^\alpha |\hat{u}(\xi)|^2\, d\xi +\frac{1}{2} \int_{\R^N} V(x)|u(x)|^2\,dx- \cI(u), \ & \ \mathrm{for} \ 0<\alpha < 2, \\
\displaystyle  \frac12\int_{\R^N} |\nabla u(x)|^2 + V(x)|u(x)|^2\,dx- \cI(u), \ & \ \mathrm{for} \ \alpha = 2,
\end{array}  \right.$$
where
$$
\cI(u) := \int_{\R^N} \Big(F(x,u(x))-\frac1q\Gamma(x)|u(x)|^q\Big)\, dx
$$
i.e. we look for a critical point  being a minimizer of $\J$ on the Nehari manifold 
$$\cN:=\{u\in E^{\alpha / 2}\setminus\{0\}:\; \J'(u)(u)=0\}.$$
Obviously $\cN$ contains all nontrivial critical points, hence a ground state is the least energy solution.

Note that if $\Gamma\not\equiv 0$, then the nonlinear part of the energy functional 
$$\int_{\R^N} \Big(F(x,u)-\frac1q\Gamma(x)|u|^q\Big)\, dx$$
is sign-changing, moreover $u\mapsto \big(f(x,u)-\Gamma(x)|u|^{q-2}u\big)/|u|^2$ is no longer increasing on $(-\infty,0)$ and $(0,\infty)$.

The classical Schr\"odinger equation (the case $\alpha = 2$) has been studied by many authors; see for instance \cite{AlamaLi, BuffoniJeanStuart, CotiZelati, KryszSzulkin, Rabinowitz:1992, MederskiTMNA2014} and references therein. The fractional case has been also widely investigated in \cite{doO, Binlin, He, Ros, Ambrosio, Chen, Davila, Fall, Davila2, Fall2, Frank2}; see also references therein.

The existence of nontrivial solutions was obtained by S. Secchi in \cite{secchi} for subcritical $f \in \cC^1 (\R^N \times \R)$ satisfying the Ambrosetti-Rabinowitz type condition $0 < \mu F(x,u) < u f(x,u)$ for $\mu > 2$ and coercive potential $V \in \cC^1 (\R^N)$. In \cite{secchi} was also introduced the Nehari manifold method with the classical monotonicity condition: $t \mapsto t^{-1} u f(x, tu)$ is increasing on $(0, \infty)$. M. Cheng proved in \cite{Cheng} that (\ref{eq}) has a nontrivial solution for the subcritical nonlinearity $f(x,u) = |u|^{p-1}u + \omega u$ and coercive potential $V(x) > 1$ for a.e. $x \in \R^N$. He showed also that there is a ground state solution being minimizer on the Nehari manifold for $0 < \omega < \lambda$, where $\lambda = \inf \sigma (\mathcal{A})$ and $\sigma (\mathcal{A})$ is the spectrum of the self-adjoint operator $\mathcal{A} := (-\Delta)^{\alpha / 2} + V(x)$ on $L^2 (\R^N)$. For $f(x,u) = |u|^{p-1}$, where $p$ is a subcritical exponent, there is a positive and spherically symmetric solution (see \cite{Dipierro}). The uniqueness of ground states $Q=Q(|x|)\geq 0$ of an equation $(-\Delta)^{\alpha / 2} Q + Q - Q^{\beta + 1} = 0$ in $\R$ was obtained by R.L. Frank and E. Lenzmann in \cite{Frank}. Recently, S. Secchi proved the existence of radially symmetric solution of $(-\Delta)^{\alpha / 2} u + V(x) u = g(u)$ for $g$ which does not satisfy the Ambrosetti-Rabinowitz condition (\cite{secchiTMNA}). Such a result was known before for $\alpha = 2$ and constant potentials $V$ (\cite{BerestyckiLions}). In our case the nonlinear term depends on $x$, does not satisfy the Ambrosetti-Rabinowitz condition and the classical monotonicity condition is violated.

When $\alpha = 2$, the existence of ground states was obtained in \cite{BieganowskiMederski} with the assumption ($V_\alpha$1). Using the abstract setting provided in \cite{BieganowskiMederski} we are able to extend this result for $0<\alpha<2$ and $q > 2$. In fact, we state the following result.

\begin{Th}\label{ThMain1}
Let $\alpha \in (0,2]$. Suppose that ($V_\alpha$1), ($\Gamma$) (F1)--(F4) are satisfied and $V_{loc} \equiv 0$ or $V_{loc} (x) < 0$ for a.e. $x\in\R^N$. Then \eqref{eq} has a ground state, i.e. there is a nontrivial critical point $u$ of $\J$ such that $\J(u)=\inf_{\cN}\J$. 
\end{Th}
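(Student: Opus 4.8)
The plan is to recast \eqref{eq} in the abstract variational framework of \cite{BieganowskiMederski} and verify its hypotheses, so that the abstract existence theorem yields the ground state. Write $g(x,u):=f(x,u)-\Ga(x)|u|^{q-2}u$ and $G(x,u):=F(x,u)-\tfrac1q\Ga(x)|u|^q$, so that $\cI(u)=\int_{\R^N}G(x,u)\,dx$ and $\J(u)=\tfrac12\|u\|^2-\cI(u)$. First I would check that the form $\|\cdot\|$ on $E^{\alpha/2}$ is equivalent to the standard $H^{\alpha/2}$-norm: for $0<\alpha<2$ this is immediate from $0<V_0\le V\le\|V\|_\infty$, and for $\alpha=2$ it follows from $\inf\sigma(-\Delta+V)>0$ and $V\in L^\infty$ by a standard convexity argument; in particular $E^{\alpha/2}$ embeds continuously into $L^s(\R^N)$ for $s\in[2,2^*_\alpha]$ and compactly on bounded domains. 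Using (F1), (F2) and ($\Gamma$), the usual Nemytskii-operator estimates then give $\cI\in\cC^1(E^{\alpha/2},\R)$ with $\cI'(u)v=\int_{\R^N}g(x,u)v\,dx$, hence $\J\in\cC^1$, and critical points of $\J$ are exactly the weak solutions of \eqref{eq}.

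Next I would verify the structural assumptions on $\cI$ demanded by the abstract theorem. From (F2) one has $G(x,u)=o(|u|^2)$ as $u\to0$, so $\cI(tu)=o(t^2)$ and $\J(tu)>0$ for small $t>0$ and any $u\ne0$. From (F3), $F(x,tu)/t^q\to+\infty$ pointwise while $\tfrac1q\Ga(x)|tu|^q=\tfrac{t^q}{q}\Ga(x)|u|^q$ is exactly of order $t^q$; as $q>2$, Fatou's lemma yields $\cI(tu)/t^2\to+\infty$, so $\J(tu)\to-\infty$ and $t\mapsto\J(tu)$ attains a positive maximum at some $t(u)>0$ with $t(u)u\in\cN$. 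The delicate point — and the reason the sign-changing term forces the abstract machinery rather than the classical Nehari/Szulkin--Weth argument, since $u\mapsto g(x,u)/|u|$ is no longer monotone — is to verify the abstract monotonicity-type condition guaranteeing that this maximizer is \emph{unique} and that the map $m:S\to\cN$, $u\mapsto t(u)u$, from the unit sphere $S$ of $E^{\alpha/2}$ is a homeomorphism with $\J\circ m$ bounded away from $0$. Here (F4), the strict monotonicity of $u\mapsto f(x,u)/|u|^q$ on each half-line, combines with $q>2$ and $\Ga\ge0$; once this is done one gets $c:=\inf_{\cN}\J=\inf_{u\ne0}\max_{t>0}\J(tu)>0$.

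It then remains to realize $c$ as a critical value. Using the $\Z^N$-periodicity of $g$ I would appeal to the Brezis--Lieb-type splitting of $\cI$ and the $\cTto$-continuity of $\cI'$ from \cite{BieganowskiMederski}: for a minimizing sequence $(u_n)\subset\cN$, after translations by $\Z^N$ and passing to a subsequence, $u_n\to u_0\ne0$ strongly provided loss of mass to infinity is excluded. The hypothesis $V_{loc}\equiv0$ or $V_{loc}<0$ a.e. enters exactly here: if $V_{loc}\equiv0$ the problem is translation invariant and no correction is needed; if $V_{loc}<0$ then $\|u\|^2=\|u\|_{per}^2+\int_{\R^N}V_{loc}|u|^2\,dx<\|u\|_{per}^2$ for $u\ne0$ (with $\|\cdot\|_{per}$ built from $V_{per}$), so $\J(u)<\J_{per}(u)$ for $u\ne0$ and hence $c<c_{per}:=\inf_{\cN_{per}}\J_{per}$; since any bump escaping to infinity contributes at least $c_{per}$ to the energy (the vanishing part $V_{loc}$ being a compact perturbation of $\|\cdot\|^2$), this strict inequality rules out dichotomy and forces strong convergence. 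The limit $u_0\in\cN$ satisfies $\J(u_0)=c$, and the abstract theorem (via a Lagrange-multiplier/deformation argument on $\cN$) upgrades this minimizer to a critical point of $\J$, i.e.\ to a ground state.

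I expect two steps to be the main obstacles. First, verifying the abstract monotonicity-type hypothesis of \cite{BieganowskiMederski} purely from (F3)--(F4) and ($\Gamma$): this is precisely what makes the sign-changing, non-Ambrosetti--Rabinowitz term admissible, and it must be done carefully because the naive monotonicity of $u\mapsto g(x,u)/|u|$ fails. Second, transferring the concentration-compactness/splitting step to the fractional range $0<\alpha<2$, which needs the fractional Lions vanishing lemma — a bounded sequence in $H^{\alpha/2}$ with $\sup_{y}\int_{B(y,R)}|u_n|^2\to0$ converging to $0$ in $L^s$, $2<s<2^*_\alpha$ — together with the fact that multiplication by $V_{loc}$ defines a compact operator on $E^{\alpha/2}$.
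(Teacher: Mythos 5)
Your proposal follows essentially the same route as the paper: check (J1)--(J4) so that Theorem \ref{ThSetting} yields a bounded minimizing Palais--Smale sequence on $\cN$, then apply the $\Z^N$-equivariant Palais--Smale decomposition of Theorem \ref{ThDecomposition} (whose fractional ingredient is precisely the Lions-type lemma of \cite{secchi} that you identify, together with compactness of the $V_{loc}$-perturbation), and conclude by translation invariance when $V_{loc}\equiv 0$ or by the comparison $c<c_{per}$ when $V_{loc}<0$. The one place your sketch is too quick is the inequality $c<c_{per}$: the pointwise strict inequality $\J(u)<\J_{per}(u)$ for $u\neq 0$ does not by itself give strict inequality of the two infima (they are taken over different Nehari manifolds, and strictness need not survive an infimum); the standard repair, available because you settle the case $V_{loc}\equiv 0$ first, is to take a ground state $w$ of $\J_{per}$ on $\cN_{per}$, project it onto $\cN$ as $tw$, and estimate $c\le \J(tw)<\J_{per}(tw)\le \max_{s>0}\J_{per}(sw)=\J_{per}(w)=c_{per}$ -- a step the paper itself also leaves implicit.
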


For $V_{loc} \equiv 0$ we provide a multiplicity result, which is new also in the case $\alpha = 2$.

Suppose that $u$ is a solution of \eqref{eq} and $k \in \mathbb{Z}^N$, observe that $u(\cdot - k)$ is also the solution, provided by $V_{loc} \equiv 0$. Therefore all elements of the orbit 
$$
\mathcal{O}(u) := \left\{ u( \cdot - k) \ : \ k \in \mathbb{Z}^N \right\}
$$
of $u$ under the $\mathbb{Z}^N$-action are solutions. Thus, we define that $u_1$ and $u_2$ are geometrically distinct if their orbits satisfy $\mathcal{O}(u_1) \cap \mathcal{O}(u_2) = \emptyset$.

\begin{Th}\label{ThMultiplicity}
Let $\alpha \in (0,2]$. Suppose that ($V_\alpha1$), ($\Gamma$) (F1)--(F4) are satisfied, $V_{loc} \equiv 0$ and suppose that $f$ is odd in $u$. Then \eqref{eq} admits infinitely many pairs $\pm u$ of geometrically distinct solutions.
\end{Th}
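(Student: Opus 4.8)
The plan is to obtain the infinitely many geometrically distinct solutions as critical points of $\J$ restricted to the Nehari manifold $\cN$, exploiting the $\Z^N$-symmetry together with the $\Z_2$-symmetry coming from the oddness of $f$. Since $V_{loc}\equiv 0$, the functional $\J$ is invariant under the action of $G:=\Z^N$ by translations $u\mapsto u(\cdot-k)$, and since $f$ is odd in $u$ (so $F$ is even) and $\Gamma(x)|u|^q$ is even in $u$, $\J$ is also even. The two standard obstacles in this non-compact periodic setting are: (i) the $\mathrm{PS}$-condition fails globally because of the translation action, so one must work with the discrete group action and rule out vanishing via a concentration-compactness/Lions-type argument; and (ii) the Nehari manifold here is not of class $\cC^1$ in the usual way because the classical monotonicity condition is violated by the sign-changing nonlinearity — this is precisely why the abstract machinery of \cite{BieganowskiMederski} is invoked.

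First I would verify that the hypotheses of the abstract critical point theorem for even functionals with a discrete group action from \cite{BieganowskiMederski} are met. Concretely: (a) the splitting $E^{\alpha/2}=E^+\oplus E^-$ is trivial here, $E^-=\{0\}$, since $(V_\alpha 1)$ guarantees positivity of the quadratic form (for $0<\alpha<2$ via $V_0>0$, for $\alpha=2$ via $\inf\sigma(-\Delta+V)>0$), so $\J$ has the mountain-pass geometry rather than a genuine linking geometry; (b) using (F1)–(F2) one shows the origin is a strict local minimum and, using (F3), that $\J(tu)\to-\infty$ as $t\to\infty$ along any fixed $u$ with $\int F(x,u)-\frac1q\Gamma|u|^q$ eventually dominating — here one needs the coercivity-type estimate that the ``good'' part $F(x,u)$ grows faster than $\frac1q\Gamma(x)|u|^q$ because $p>q$ and (F3), which is exactly condition (F3) doing its work; (c) the map $u\mapsto \cI(u)$ is weakly sequentially lower/upper semicontinuous in the appropriate sense and $\cI'$ is ``completely continuous'' on bounded sets after translations, via the compact embedding $H^{\alpha/2}\hookrightarrow L^r_{loc}$ for $2\le r<2^*_\alpha$ — this is where the subcriticality $p<2^*_\alpha$ is essential. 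One also checks the key structural hypotheses on $\cI$: $\cI(0)=0$, $\cI$ is even, $\cI'(u)(u)>2\cI(u)$ for $u\ne0$ (which follows from (F4) and $q>2$ for the $F$ part, and from $q>2$ for the $-\frac1q\Gamma|u|^q$ part since that contributes $+\frac{q-2}{q}\Gamma|u|^q\ge0$ to $\cI'(u)(u)-2\cI(u)$), and the ``$t$-monotonicity along the Nehari fibre'' that guarantees for each $u\ne0$ there is a unique $t(u)>0$ with $t(u)u\in\cN$ and $t=t(u)$ strictly maximizes $t\mapsto\J(tu)$. This last point is the one I expect to be the main obstacle: the naive argument that $g(t)=\J(tu)$ has a unique critical point uses the classical monotonicity of $u\mapsto f(x,u)/u$, which fails here; instead one must use (F4) (monotonicity of $f(x,u)/|u|^q$) combined with $q>2>1$ and the sign $\Gamma\ge0$ to show $g'(t)/t = \|u\|^2 - \int (f(x,tu)u - \Gamma|tu|^{q-2}tu\cdot u)\,dx$ changes sign exactly once; concretely, writing $g'(t)t = \|tu\|^2 - \cI'(tu)(tu)$ and dividing by $t^q$, one wants $t\mapsto t^{2-q}\|u\|^2 - \int(f(x,tu)(tu) - \Gamma|tu|^q)/t^q\,dx$ to be strictly decreasing, which works because $t^{2-q}$ is strictly decreasing (as $q>2$), $\int f(x,tu)(tu)/t^q$ is nondecreasing by (F4), and $-\int\Gamma|u|^q$ is constant in $t$ and the extra sign is harmless. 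This pins down the Nehari manifold as a topological manifold homeomorphic to the unit sphere via $u\mapsto t(u)u$.

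Next, I would transport the problem to the sphere: define $m:S^{\alpha/2}\to\cN$, $m(u)=t(u)u$, and $\Psi:=\J\circ m:S^{\alpha/2}\to\R$, inheriting evenness and $\Z^N$-invariance. By the abstract theorem of \cite{BieganowskiMederski}, critical points of $\Psi$ correspond to critical points of $\J$, $\Psi$ satisfies the $\mathrm{PS}$-condition modulo the $\Z^N$-action (i.e. every $\mathrm{PS}$ sequence has, after translations by $\Z^N$, a convergent subsequence — here one invokes the non-vanishing lemma: if a $\mathrm{PS}$ sequence vanished in the sense $\sup_{y}\int_{B_1(y)}|u_n|^2\to0$ then by a fractional Lions lemma $u_n\to0$ in $L^r$ for $2<r<2^*_\alpha$, forcing $\Psi(u_n)\to0<\inf\Psi<\infty$, a contradiction since $\inf_{\cN}\J>0$ by the mountain-pass geometry). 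Then a standard $\Z_2\times\Z^N$-equivariant minimax / Krasnoselskii genus (or Lusternik–Schnirelmann) argument over the set of symmetric subsets of $S^{\alpha/2}$ produces an unbounded sequence of critical values $c_1\le c_2\le\cdots\to\infty$; if only finitely many $\Z^N$-orbits of critical points existed, the corresponding critical set would have finite genus and the minimax values would stabilize, contradicting $c_k\to\infty$. Hence there are infinitely many geometrically distinct critical points, and because $\J$ is even they come in pairs $\pm u$, which are also geometrically distinct from each other (or, if some $u$ were in the orbit of $-u$, one still extracts infinitely many pairs). This yields infinitely many pairs $\pm u$ of geometrically distinct solutions of \eqref{eq}, completing the proof. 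The only genuinely $\alpha$-dependent points are the fractional Sobolev embeddings and the fractional Lions vanishing lemma; everything else is the abstract scheme of \cite{BieganowskiMederski} applied verbatim.
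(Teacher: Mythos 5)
Your overall plan (reduce to the unit sphere via the Nehari homeomorphism $m$, exploit the $\Z_2\times\Z^N$ symmetry, run a genus/Lusternik--Schnirelmann argument and derive a contradiction with finiteness of orbits) is the right skeleton, and your verification of the fibre-map/uniqueness-of-$t(u)$ structure under (F4) with the sign $\Gamma\ge 0$ matches the paper's (J3). But there is a genuine gap at the compactness step. You assert that $\cJ\circ m$ satisfies the Palais--Smale condition \emph{modulo the $\Z^N$-action}, i.e.\ that every PS sequence converges after suitable translations. This is false in general in the periodic setting: by the decomposition result (Theorem \ref{ThDecomposition}) a bounded PS sequence at a higher minimax level $c_k$ may split into \emph{several} profiles $w^1(\cdot-y^1_n),\dots,w^\ell(\cdot-y^\ell_n)$ with $|y^k_n-y^{k'}_n|\to\infty$, and no single sequence of translations recompactifies it; your Lions-type non-vanishing argument only rules out the vanishing case and only helps at the ground-state level. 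Because of this, the "standard" equivariant deformation/genus machinery cannot be invoked as you do. The substitute, which is exactly what the paper supplies, is the Szulkin--Weth \emph{discreteness of Palais--Smale sequences} (Lemma \ref{LemDiscreteness}): under the contradiction hypothesis that there are finitely many orbits (so $\kappa>0$), any two PS sequences on $S^1$ below level $d$ either coalesce or stay a definite distance $\rho(d)>0$ apart. Proving this here is not routine verbatim repetition: the sign-changing term forces an extra estimate on $\int\Gamma(x)\bigl[|u^1_n|^{q-2}u^1_n-|u^2_n|^{q-2}u^2_n\bigr](u^1_n-u^2_n)\,dx$, handled via the interpolation $|u^1_n-u^2_n|_q\le |u^1_n-u^2_n|_2^{\theta}|u^1_n-u^2_n|_p^{1-\theta}$, and one also needs the $\Z^N$-equivariance of $m$, $m^{-1}$, $\nabla\cJ$, $\nabla(\cJ\circ m)$ and the Lipschitz continuity of $m^{-1}$ (Lemma \ref{LemEquivariance} and the following lemma) to build an equivariant pseudo-gradient flow. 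None of this appears in your proposal.

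A second, related flaw is the contradiction mechanism itself. You argue "finitely many orbits $\Rightarrow$ critical set of finite genus $\Rightarrow$ the minimax values stabilize, contradicting $c_k\to\infty$", but you never establish $c_k\to\infty$ (and it is not needed), and "stabilization" is not what finite genus gives you; with compactness one would argue the opposite way (repeated values force large genus of the critical set). The paper, following Szulkin--Weth, instead shows \emph{under the finiteness assumption} that each Lusternik--Schnirelmann value $c_k$ is attained and that $c_k<c_{k+1}$ strictly, which yields infinitely many critical values while finitely many orbits can produce only finitely many; that is the contradiction. So the proposal needs to be repaired by replacing the false PS-mod-translations claim with the discreteness lemma (including the new estimate for the $\Gamma$-term) and by redoing the minimax step along these lines.
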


Moreover, we investigate the existence of solutions when the potential is coercive.

\begin{Th}\label{ThMain2}
Let $\alpha \in (0,2]$. Suppose that ($V_\alpha$2), ($\Gamma$) (F1)--(F4) are satisfied. Then \eqref{eq} has a ground state, i.e. there is a nontrivial critical point $u$ of $\J$ such that $\J(u)=\inf_{\cN}\J$. 
\end{Th}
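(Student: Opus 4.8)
The plan is to deduce Theorem~\ref{ThMain2} from the same abstract critical point machinery of \cite{BieganowskiMederski} that underlies Theorem~\ref{ThMain1}, the only difference being that under the coercivity hypothesis $(V_\alpha 2)$ the compactness issue — which for periodic potentials requires a concentration--compactness analysis — is resolved at once by a compact Sobolev embedding. First I would record the functional-analytic preliminaries: since $V$ is continuous with $V_0=\inf V>0$, the space $E^{\alpha/2}$ is a Hilbert space continuously embedded in $H^{\alpha/2}(\R^N)$ (in $H^1(\R^N)$ when $\alpha=2$), hence in $L^p(\R^N)$ for every $p\in[2,2^*_\alpha]$, and $\J$ is of class $\cC^1$ with $\J=\frac12\|\cdot\|^2-\cI$; this uses only (F1) and $(\Gamma)$.

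The crucial new ingredient is that $E^{\alpha/2}\hookrightarrow L^p(\R^N)$ compactly for $2\le p<2^*_\alpha$. For a bounded sequence $u_n\weakto u$ in $E^{\alpha/2}$, the compactness of $H^{\alpha/2}(B_R)\hookrightarrow L^2(B_R)$ on balls yields $u_n\to u$ in $L^2(B_R)$ for each $R$ (diagonal argument), while on the complement $\int_{|x|>R}|u_n|^2\le(\inf_{|x|>R}V)^{-1}\int_{\R^N} V|u_n|^2$ tends to $0$ as $R\to\infty$ uniformly in $n$ because $V(x)\to\infty$; combining these and interpolating with the uniform $L^{2^*_\alpha}$ bound gives $u_n\to u$ in $L^p(\R^N)$. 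A routine consequence, using (F1) and the continuity of the associated Nemytskii operators, is that $\cI$ and $\cI'$ are completely continuous on $E^{\alpha/2}$.

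With these facts the argument proceeds exactly as for Theorem~\ref{ThMain1}, since the hypotheses (F1)--(F4), $q>2$ and $(\Gamma)$ on the nonlinearity are unchanged: (F1)--(F2) give $\cI(u)=o(\|u\|^2)$ as $u\to0$, so $\cN\ne\emptyset$, $\cN$ is bounded away from $0$ and $c:=\inf_\cN\J>0$; (F3) makes $t\mapsto\J(tu)$ anticoercive on every ray $\R_+u$; and (F4) together with $q>2$ and $\Gamma\ge0$ forces the fibering map $t\mapsto\J(tu)$ to have a unique positive maximum point $t(u)$, so that $w\mapsto t(w)w$ is a homeomorphism of the unit sphere of $E^{\alpha/2}$ onto $\cN$, $\cN$ is a $\cC^1$ natural constraint for $\J$, and $\J|_\cN$ is bounded below. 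Finally I would take a minimizing sequence $(u_n)\subset\cN$ for $\J|_\cN$; a standard argument using (F3) and $u_n\in\cN$ shows $(u_n)$ is bounded, so by the compact embedding $u_n\to u$ in $L^p(\R^N)$ along a subsequence, with $u\ne0$ (else $\|u_n\|\to0$, contradicting $c>0$); weak lower semicontinuity of $\|\cdot\|^2$ and complete continuity of $\cI$ then give $\J(t(u)u)\le c$, hence $t(u)u$ minimizes $\J$ over $\cN$, and since $\cN$ is a natural constraint it is a critical point of $\J$, i.e.\ a ground state of \eqref{eq}.

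The variational part of this is the coercive-potential specialization of what is already done for Theorem~\ref{ThMain1}, so the main obstacle I anticipate is simply verifying that all the $\alpha$-dependent inputs — the compact embedding of the weighted fractional Sobolev space, the $\cC^1$-regularity of $\J$, and the structural (fibering) properties underlying the Nehari reduction — which were established for $\alpha=2$ in \cite{BieganowskiMederski}, genuinely carry over to every $\alpha\in(0,2)$; this reduces to the mapping properties of $H^{\alpha/2}(\R^N)$ and the subcriticality condition $p,q<2^*_\alpha$, all standard.
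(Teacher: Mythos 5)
Your proposal is correct, but it takes a genuinely different route from the paper's. The paper also starts from the abstract Nehari machinery of Theorem \ref{ThSetting}, which produces a \emph{bounded Palais--Smale} minimizing sequence $(u_n)\subset\cN$; the weak limit $u_0$ is then a critical point for free (from $\cJ'(u_n)\to 0$ and local convergence), and the whole issue is nontriviality of $u_0$. For that the paper deliberately avoids the compact embedding of $E^{\alpha/2}$ into $L^p(\R^N)$ (it cites \cite{Cheng} for this fact and chooses instead to adapt Secchi's method \cite{secchi}): from $u_n\in\cN$ and $\cJ'(u_n)\to 0$ it gets $\tfrac{c}{2}\le\tfrac12\int_{\R^N}f(x,u_n)u_n\,dx$ for large $n$, then the fractional Sobolev--Gagliardo--Nirenberg inequality (Lemma \ref{LemSGN}) yields a uniform lower bound $|u_n|_2^2\ge\tilde c>0$, and finally the tail estimate $\int_{|x|\ge R}|u_n|^2\,dx\le\|u_n\|^2/\inf_{|x|\ge R}V$ together with $V(x)\to\infty$ rules out $u_n\to 0$ in $L^2_{loc}$, i.e.\ $u_0\neq 0$. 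You instead prove (or quote) the compact embedding $E^{\alpha/2}\hookrightarrow L^p(\R^N)$, $2\le p<2^*_\alpha$, deduce complete continuity of $\cI$, and run a direct minimization over $\cN$: weak limit nonzero, projection back onto $\cN$ via the fibering map, weak lower semicontinuity, then criticality. Note that your key tail estimate is exactly the one the paper uses, only packaged as compactness of the embedding rather than inside a contradiction argument. Your route needs only boundedness of the minimizing sequence (not its Palais--Smale property) and is the more classical ``Cheng-type'' argument that the paper explicitly sidesteps; the paper's route avoids invoking or re-proving the compact embedding and gets $\cJ'(u_0)=0$ automatically. One caveat in your write-up: under (F1)--(F4) the nonlinearity is merely continuous in $u$, so $\cN$ need not be a $\cC^1$ manifold; the step from ``minimizer on $\cN$'' to ``critical point of $\cJ$'' should be justified, as you in effect do, through the Szulkin--Weth/\cite{BieganowskiMederski} homeomorphism $m:S^1\to\cN$ and the identity for $(\cJ\circ m)'$, not through a Lagrange multiplier argument on a $\cC^1$ natural constraint.
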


Observe that this result does not require such a regularity of the potential and the nonlinear term as \cite{secchi}[Theorem 3.1] -- we do not need the differentiability of the right side of (\ref{eq}) and the potential. Note that the coercive potential has been studied in \cite{Torres}, however our nonlinearity is not of the form $f(u)+h(x)$, where $f$ satisfies the Ambrosetti-Rabinowitz condition.

\section{Preliminary facts}
\label{sect:Preliminary}

Suppose that $E$ is an Hilbert space with respect to the norm $\| \cdot \|$. Let us consider a functional $\cJ : E \rightarrow \R$ of the general form
$$
\cJ(u) = \frac{1}{2} \|u\|^2 - \cI (u),
$$
where $\cI : E \rightarrow \R$ is of $\cC^1$-class. Let us recall a critical point theorem from \cite{BieganowskiMederski}, which is based on the approach of \cite{SzulkinWeth} and \cite{BartschMederski1}.

\begin{Th}[\cite{BieganowskiMederski}]\label{ThSetting}
Suppose that the following conditions hold:
\begin{enumerate}
\item[(J1)] there is $r>0$ such that $a:= \inf_{\|u\|=r} \cJ (u) > \cJ(0) = 0$;
\item[(J2)] there is $q \geq 2$ such that $\cI (t_n u_n) / t_n^q \to \infty$ for any $t_n \to\infty$ and $u_n\to u \neq 0$ as $n\to\infty$;
\item[(J3)] for $t \in (0,\infty) \setminus \{1\}$ and $u \in \cN$
$$
\frac{t^2-1}{2} \cI'(u)(u) - \cI(tu)+\cI(u) < 0;
$$
\item[(J4)] $\cJ$ is coercive on $\cN$.
\end{enumerate}
Then $\inf_\cN \cJ > 0$ and there exists a bounded minimizing sequence for $\cJ$ on $\cN$, i.e. there is a sequence $(u_n) \subset \cN$ such that $\cJ(u_n) \to \inf_\cN \cJ$ and $\cJ'(u_n) \to 0$.
\end{Th}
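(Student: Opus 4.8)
The plan is to run the Nehari-manifold reduction of Szulkin and Weth, for which conditions (J1)--(J4) are precisely tailored. Write $\cN=\{u\in E\setminus\{0\}:\cJ'(u)(u)=0\}$ and, for $u\in E\setminus\{0\}$, let $\vp_u(t):=\cJ(tu)=\tfrac{t^2}{2}\|u\|^2-\cI(tu)$ for $t>0$ be the fiber map. The first block of work is to establish: (i) each ray $\{tu:t>0\}$ meets $\cN$ in exactly one point $m(u):=t(u)u$, which is the unique global maximum of $\vp_u$; (ii) $u\mapsto t(u)$ is continuous, so $m$ restricts to a homeomorphism of the unit sphere $S=\{u\in E:\|u\|=1\}$ onto $\cN$, with inverse $u\mapsto u/\|u\|$; and (iii) $\psi:=\cJ\circ m|_S\in\cC^1(S,\R)$ with $\psi'(w)z=\|m(w)\|\,\cJ'(m(w))z$ for $w\in S$, $z\in T_wS$. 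Granting (i)--(iii), one has $\inf_S\psi=\inf_\cN\cJ$; since $S$ is a complete $\cC^1$-manifold, Ekeland's variational principle furnishes $(w_n)\subset S$ with $\psi(w_n)\to\inf_S\psi$ and $\psi'(w_n)\to0$, whereupon $u_n:=m(w_n)\in\cN$ is the desired sequence, its boundedness coming from (J4).

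For (i): since $\cJ(0)=0$ forces $\cI(0)=0$, $\vp_u(t)\to0$ as $t\to0^+$, while (J1) gives $\vp_u(r/\|u\|)=\cJ(ru/\|u\|)\geq a>0$, so $\sup_{t>0}\vp_u>0$; and (J2), applied with the constant sequence $u$ and $t_n=t\to\infty$, gives $\vp_u(t)=t^q\big(\tfrac12 t^{2-q}\|u\|^2-\cI(tu)/t^q\big)\to-\infty$ because $q\geq2$. Hence $\vp_u$ attains a positive maximum at some $t_0>0$, and $\vp_u'(t_0)=0$ is exactly $\cJ'(t_0u)(t_0u)=0$, i.e. $t_0u\in\cN$. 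Conversely, if $t_0u\in\cN$ then $\|t_0u\|^2=\cI'(t_0u)(t_0u)$, so (J3) with $v=t_0u$ rewrites as $\cJ(t\cdot t_0u)-\cJ(t_0u)<0$ for every $t\neq1$; thus each point of $\cN$ is the \emph{strict} global maximum of its own fiber, and, two strict maxima being impossible, $t_0$ is unique, which defines $m(u)=t(u)u$. Continuity of $t(\cdot)$ then follows from this uniqueness by a routine closed-graph/compactness argument. Finally $\inf_\cN\cJ>0$: for $u\in\cN$, $\cJ(u)=\max_{t>0}\cJ(tu)\geq\cJ(ru/\|u\|)\geq a$; moreover $\cI(u)=\tfrac12\|u\|^2-\cJ(u)\leq\tfrac12\|u\|^2-a$, which, as $\cI$ is continuous with $\cI(0)=0$, precludes $\|u\|\to0$ along $\cN$, so $\rho:=\inf_{u\in\cN}\|u\|>0$.

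The step I expect to be the main obstacle is (iii): making $\psi=\cJ\circ m|_S$ a $\cC^1$ functional with the stated derivative identity. Because $\cI$ is only $\cC^1$, the implicit function theorem cannot be used to produce a $\cC^1$ section $t(\cdot)$, so I would argue directly in the Szulkin--Weth style: fix $w\in S$, a tangent vector $z$, and a $\cC^1$ curve $s\mapsto\gamma(s)\in S$ with $\gamma(0)=w$, $\gamma'(0)=z$, then split $\psi(\gamma(s))-\psi(w)=\big[\cJ(t(\gamma(s))\gamma(s))-\cJ(t(\gamma(s))w)\big]+\big[\cJ(t(\gamma(s))w)-\cJ(t(w)w)\big]$; the second bracket is $\leq0$ by maximality of $t(w)$ along the ray through $w$, and running the same estimate with $-s$ sandwiches the difference quotient, so that the mean value theorem applied to the first bracket together with continuity of $t(\cdot)$ yields $\psi'(w)z=t(w)\,\cJ'(t(w)w)z=\|m(w)\|\,\cJ'(m(w))z$, with $\psi'$ continuous since $t(\cdot)$ and $\cJ'$ are. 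Once (iii) holds, the last passage is routine: for $\xi\in E$ split $\xi=z+\langle\xi,w_n\rangle w_n$ with $z\in T_{w_n}S$; since $\cJ'(u_n)w_n=\|u_n\|^{-1}\cJ'(u_n)u_n=0$ and $\cJ'(u_n)z=\|u_n\|^{-1}\psi'(w_n)z$, we get $\|\cJ'(u_n)\|_{E^*}\leq\|\psi'(w_n)\|/\|u_n\|\to0$ using $\|u_n\|\geq\rho>0$; and $\cJ(u_n)\to\inf_\cN\cJ$ being bounded, (J4) bounds $\|u_n\|$ from above, so $(u_n)\subset\cN$ is a bounded Palais--Smale-type sequence at the level $\inf_\cN\cJ>0$, as claimed.
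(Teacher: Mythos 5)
Your proposal is correct and follows exactly the route the paper relies on: the theorem is quoted from \cite{BieganowskiMederski} (no proof is reproduced here), and that proof is precisely the Szulkin--Weth Nehari-manifold reduction you reconstruct --- unique strict fiber maxima via (J1)--(J3), the homeomorphism $m:S^1\to\cN$, the $\cC^1$ functional $\psi=\cJ\circ m$ with $\psi'(w)z=\|m(w)\|\,\cJ'(m(w))z$, Ekeland on the sphere, and boundedness from (J4). Nothing essentially different or missing.
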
 

This setting allows us to find a bounded minimizing sequence for $\cJ$ on $\cN$. We only need to check whether (J1)--(J4) are satisfied -- we follow arguments from \cite{BieganowskiMederski}, we provide details below.

\begin{Lem}
Let $E = E^{\alpha / 2}$ and take
$$
\cI(u) = \int_{\R^N} \Big(F(x,u(x))-\frac1q\Gamma(x)|u(x)|^q\Big)\, dx.
$$
Suppose that ($\Ga$), (F1)--(F4) are satisfied. Suppose also that ($V_\alpha$1) or ($V_\alpha$2) hold. Then (J1)--(J4) hold.
\end{Lem}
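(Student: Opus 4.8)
The plan is to verify conditions (J1)--(J4) of Theorem~\ref{ThSetting} in turn, using the structural hypotheses (F1)--(F4), ($\Ga$) and either ($V_\alpha$1) or ($V_\alpha$2). First a word on the functional-analytic preliminaries that all four verifications rely on: under ($V_\alpha$1) with $V_0>0$ (resp. $\inf\sigma(-\Delta+V)>0$ when $\alpha=2$) the norm $\|\cdot\|$ on $E^{\alpha/2}$ is equivalent to the standard $H^{\alpha/2}$-norm, so $E^{\alpha/2}\hookrightarrow L^s(\R^N)$ continuously for $s\in[2,2^*_\alpha]$ and compactly on bounded domains; under ($V_\alpha$2) the embedding $E^{\alpha/2}\hookrightarrow L^s(\R^N)$ is in fact compact for $s\in[2,2^*_\alpha)$. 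Together with (F1)--(F2) this makes $\cI$ well defined and of class $\cC^1$ with $\cI'(u)(v)=\int_{\R^N}\big(f(x,u)-\Gamma(x)|u|^{q-2}u\big)v\,dx$, which I would state once and then use freely.

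\textbf{(J1).} From (F1)--(F2), for every $\eps>0$ there is $C_\eps>0$ with $|F(x,u)|\le \eps|u|^2+C_\eps|u|^p$ for all $u$. Since $\Gamma\ge 0$ we have $F(x,u)-\frac1q\Gamma(x)|u|^q\le F(x,u)\le \eps|u|^2+C_\eps|u|^p$, hence by the Sobolev embeddings $\cI(u)\le \eps c_1\|u\|^2+C_\eps c_2\|u\|^p$. Thus $\cJ(u)\ge(\tfrac12-\eps c_1)\|u\|^2-C_\eps c_2\|u\|^p$; choosing $\eps$ small and then $r>0$ small (using $p>2$) gives $a:=\inf_{\|u\|=r}\cJ(u)>0=\cJ(0)$.

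\textbf{(J2).} Fix $t_n\to\infty$ and $u_n\to u\neq 0$ in $E^{\alpha/2}$. Write $\cI(t_nu_n)/t_n^q=\int_{\R^N}\frac{F(x,t_nu_n)}{|t_nu_n|^q}|u_n|^q\,dx-\frac1q\int_{\R^N}\Gamma(x)|u_n|^q\,dx$ on the set where $u_n\neq0$. The second term is bounded (as $\Gamma\in L^\infty$ and $u_n$ is bounded in $L^q$). For the first term I would pass to a subsequence with $u_n\to u$ a.e., note $t_n|u_n(x)|\to\infty$ a.e.\ on $\{u\neq0\}$, and invoke (F3) together with Fatou's lemma (after checking $F\ge 0$ for large argument, or splitting off a bounded part) to conclude the first integral $\to\infty$; hence $\cI(t_nu_n)/t_n^q\to\infty$.

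\textbf{(J3).} This is the crux and the step I expect to be the main obstacle, because the "effective" nonlinearity $f(x,u)-\Gamma(x)|u|^{q-2}u$ is \emph{not} monotone in the usual Nehari sense. The idea is to prove the pointwise inequality
\[
\tfrac{t^2-1}{2}\big(f(x,u)u-\Gamma(x)|u|^q\big)-\big(F(x,tu)-\tfrac1q\Gamma(x)t^q|u|^q\big)+\big(F(x,u)-\tfrac1q\Gamma(x)|u|^q\big)<0
\]
for $t\in(0,\infty)\setminus\{1\}$ and $u\neq0$, then integrate. Split the left side as $A(t,x,u)+\Gamma(x)B(t,|u|)$, where $A$ involves only $f,F$ and $B(t,s)=-\tfrac{t^2-1}{2}s^q+\tfrac1q t^qs^q-\tfrac1q s^q=\big(\tfrac{t^q-1}{q}-\tfrac{t^2-1}{2}\big)s^q$. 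Since $q>2$, an elementary convexity computation shows $\frac{t^q-1}{q}-\frac{t^2-1}{2}<0$ for all $t\in(0,1)\cup(1,\infty)$ (equality only at $t=1$), so $\Gamma(x)B(t,|u|)\le 0$ because $\Gamma\ge0$. For the $A$-term, the standard argument from \cite{SzulkinWeth}: condition (F4) (strict monotonicity of $u\mapsto f(x,u)/|u|^q$ on each half-line) implies, via a one-variable lemma, that $\frac{t^2-1}{2}f(x,u)u-F(x,tu)+F(x,u)<0$ for $t\neq1$, $u\neq0$ — this uses $q>2$ as well. Adding the two contributions and integrating (the integrand being $\le 0$ and $<0$ on a positive-measure set since $u\in\cN$ is nontrivial) gives (J3).

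\textbf{(J4).} Suppose $(u_n)\subset\cN$ with $\|u_n\|\to\infty$; set $v_n:=u_n/\|u_n\|$, so $\|v_n\|=1$ and, up to a subsequence, $v_n\weakto v$ in $E^{\alpha/2}$ and $v_n\to v$ a.e. If $v\neq0$, apply (J2) with $t_n:=\|u_n\|\to\infty$: then $\cJ(u_n)=\cJ(t_nv_n)=\tfrac12t_n^2-\cI(t_nv_n)$ and dividing by $t_n^q$ (recall $q>2$) forces $\cJ(u_n)/t_n^q\to-\infty$, contradicting $\cJ\ge0$ on $\cN$ — more carefully, on $\cN$ one has $\cJ(u_n)\ge\cJ(tu_n)=\tfrac{t^2}2\|u_n\|^2-\cI(tu_n)$ for all $t>0$; picking $t=T/\|u_n\|$ for fixed large $T$ and using that $\cI(Tv_n)\to\cI(Tv)$ along a subsequence (Sobolev/Vitali, using compactness on bounded domains plus the periodicity/decay structure, or directly compact embedding under ($V_\alpha$2)) shows $\liminf\cJ(u_n)\ge\tfrac{T^2}{2}-\sup_n\cI(Tv_n)$, which would be arbitrarily large — contradiction, hence $\cJ$ is coercive on $\cN$ in this case. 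If $v=0$, then by the subcritical growth and the compact-on-bounded-domains (or fully compact, under ($V_\alpha$2)) embedding one gets $\cI(sv_n)\to 0$ for every fixed $s>0$ — in the periodic case ($V_\alpha$1) one first rules out vanishing via a concentration-compactness / Lions-type lemma, translating $u_n$ so that the shifted sequence does not converge weakly to $0$, then reduces to the previous case; this again yields $\cJ(u_n)\ge\tfrac{s^2}{2}-o(1)$ for all $s$, so $\cJ(u_n)\to\infty$. Either way (J4) holds. With (J1)--(J4) verified, Theorem~\ref{ThSetting} applies and the Lemma is proved.
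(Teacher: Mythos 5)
Your verifications of (J1) and (J2) coincide with the paper's, but your (J3) breaks down, and it breaks down at exactly the point this paper is designed to circumvent. First, the elementary inequality you invoke has the wrong sign: for $q>2$ the function $h(t)=\frac{t^q-1}{q}-\frac{t^2-1}{2}$ satisfies $h(1)=0$ and $h'(t)=t^{q-1}-t$, which is negative on $(0,1)$ and positive on $(1,\infty)$, so $h(t)>0$ for every $t\neq 1$. Consequently your term $\Ga(x)B(t,|u|)=\Ga(x)h(t)|u|^q$ is \emph{nonnegative}, i.e.\ it works against the desired strict negativity rather than for it. Second, the pointwise ``$A$-part'' inequality $\frac{t^2-1}{2}f(x,u)u-F(x,tu)+F(x,u)<0$ is the Szulkin--Weth inequality, which needs $u\mapsto f(x,u)/|u|$ to be increasing; this is not assumed (the introduction explicitly states that the classical monotonicity condition is violated), and (F4) with exponent $q>2$ does not imply it when $f$ takes negative values: for instance $f(x,u)=|u|^{q-1}u\,(|u|-1)$ near $u=0$ satisfies (F2) and (F4), yet $f(u)/u=u^{q-1}(u-1)$ is decreasing near $0^+$ and the function $\varphi(t)=\frac{t^2-1}{2}f(u)u-F(tu)+F(u)$ then satisfies $\varphi''(1)>0$ for small $u>0$, so $\varphi(t)>0$ for $t$ near $1$. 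Thus neither half of your pointwise decomposition is valid, and no purely pointwise argument can work here because the effective nonlinearity $f(x,u)-\Ga(x)|u|^{q-2}u$ is sign-changing. The paper's proof is not pointwise: it fixes $u\in\cN$, studies $\psi(t)=\frac{t^2-1}{2}\cI'(u)(u)-\cI(tu)+\cI(u)$, and uses the Nehari constraint $\int_{\R^N}f(x,u)u\,dx-\int_{\R^N}\Ga(x)|u|^q\,dx=\|u\|^2>0$ to bound the unfavourable term $(t^{q-1}-t)\int_{\R^N}\Ga(x)|u|^q\,dx$ by $(t^{q-1}-t)\int_{\R^N}f(x,u)u\,dx$ for $t>1$, so that $\psi'(t)<\int_{\R^N}\big(t^{q-1}f(x,u)u-f(x,tu)u\big)dx$, which is then handled by (F4) with the \emph{same} exponent $q$ (and symmetrically $\psi'>0$ for $t<1$), giving $\psi(t)<\psi(1)=0$. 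Note that (J3) only has to be checked on $\cN$; by trying to prove an inequality for arbitrary $u\neq 0$ you discard precisely the information that rescues the argument.

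Your (J4) is also far heavier than necessary and has gaps of its own: $\cI(Tv_n)\to\cI(Tv)$ does not follow from weak convergence of $v_n$ (one would need a Lions-type concentration step under ($V_\alpha$1)), and (J2) cannot be invoked for a weakly convergent sequence since it is stated for strong convergence $u_n\to u\neq 0$. The paper's verification is a direct estimate: from the monotonicity assumption one gets $f(x,u)u\geq qF(x,u)$, and then for $u_n\in\cN$, using $\|u_n\|^2=\int_{\R^N}f(x,u_n)u_n\,dx-\int_{\R^N}\Ga(x)|u_n|^q\,dx$, one obtains $\cJ(u_n)=\big(\tfrac12-\tfrac1q\big)\|u_n\|^2+\int_{\R^N}\big(\tfrac1q f(x,u_n)u_n-F(x,u_n)\big)dx\geq\big(\tfrac12-\tfrac1q\big)\|u_n\|^2\to\infty$, with no compactness or normalization needed. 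I recommend redoing (J3) along the paper's lines and replacing your (J4) by this two-line computation.
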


\begin{proof}
\begin{itemize}
	\item[(J1)] Fix $\varepsilon > 0$. Observe that (F1) and (F2) implies that $F(x,u) \leq \varepsilon |u|^2 + C_\varepsilon |u|^p$ for some $C_\varepsilon > 0$. Therefore 
	$$
	\int_{\R^N} F(x,u) \, dx - \int_{\R^N} \frac{1}{q} \Ga (x) |u|^q \, dx \leq \int_{\R^N} F(x,u) \, dx \leq C(\varepsilon \|u\|^2 + C_\varepsilon \|u\|^p),
	$$
for some constant $C > 0$ provided by the Sobolev embedding theorem. Thus there is $r > 0$ such that 
$$
\int_{\R^N} F(x,u) \, dx - \int_{\R^N} \frac{1}{q} \Ga (x) |u|^q \, dx \leq \frac{1}{4} \|u\|^2
$$
for $\|u\| \leq r$.
Therefore
$$
\cJ (u) \geq \frac{1}{4} \|u\|^2 = \frac{1}{4} r^2 > 0
$$
for $\|u\| = r$.
	\item[(J2)] By (F3) and Fatou's lemma we get
	$$
	\cI (t_n u_n) / t_n^q = \int_{\R^N} \frac{F(x, t_n u_n)}{t_n^q} \, dx - \frac{1}{q} \int_{\R^N} \Ga (x) |u_n|^q \, dx \to \infty.
	$$
	\item[(J3)] Fix $u \in \cN$ and consider
$$
\psi (t) = \frac{t^2-1}{2} \cI'(u)(u) - \cI(tu) + \cI(u)
$$
for $t \geq 0$. Then $\psi(1) = 0$ and
$$
\frac{d\psi(t)}{dt} = \int_{\R^N} tf(x,u)u - f(x,tu)u \, dx + (t^{q-1} - t) \int_{\R^N} \Ga(x) |u|^q \, dx.
$$
Since $u \in \cN$
$$
\int_{\R^N} f(x,u)u \, dx - \int_{\R^N} \Ga (x) |u|^q \, dx = \|u\|^2 > 0.
$$
Therefore, for $t>1$ we have
$$
\frac{d\psi(t)}{dt} < \int_{\R^N} t^{q-1} f(x,u)u - f(x,tu)u \, dx = t^{q-1} \int_{\R^N} f(x,u)u - \frac{f(x,tu)u}{t^{q-1}} \, dx < 0,
$$
by (F4). Similarly $\frac{d\psi(t)}{dt} > 0$ for $t < 1$. Therefore $\psi(t) < \psi(1) = 0$ for $t \neq 1$, i.e.
$$
\frac{t^2-1}{2} \cI'(u)(u) - \cI(tu) + \cI(u) < 0.
$$

\item[(J4)] Let $(u_n) \subset \cN$ be a sequence such that $\|u_n\| \to \infty$ as $n\to\infty$. (F3) implies that
$$
f(x,u)u = q\int_0^u \frac{f(x,u)}{u^{q-1}}s^{q-1}\,ds \geq q\int_0^u \frac{f(x,s)}{s^{q-1}}s^{q-1}\,ds=q F(x,u)
$$
for $u\geq 0$ and similarly $f(x,u)u\geq q F(x,u)$ for $u<0$.
Therefore
\begin{eqnarray*}
\cJ(u_n) &=& \frac{1}{2} \|u_n\|^2 - \int_{\R^N} F(x, u_n) \, dx + \frac{1}{q} \int_{\R^N} \Ga (x) |u_n|^q \, dx = \\
&=& \left( \frac{1}{2} - \frac{1}{q} \right) \|u_n\|^2 + \int_{\R^N} \frac{1}{q} f(x, u_n) u_n - F(x, u_n) \, dx \geq \\
&\geq& \left( \frac{1}{2} - \frac{1}{q} \right) \|u_n\|^2 \to \infty
\end{eqnarray*}
\end{itemize}
as $n\to\infty$, since $q > 2$.
\end{proof}

\section{Palais-Smale sequences decomposition}
\label{sect:Splitting}

The main theorem in this section generalizes the decomposition result form \cite{BieganowskiMederski}[Theorem 4.1]. We consider the functional $\cJ : H^{\alpha / 2} (\R^N) \rightarrow \R$ of the form
$$
\cJ (u) = \frac{1}{2} \|u\|^2 - \int_{\R^N} G(x,u) \, dx.
$$
We assume that $(V_\alpha 1)$ holds and therefore we may consider the following norm
$$
\|u\|^2 = \int_{\R^N} |\xi|^\alpha |\hat{u}(\xi)|^2 \, d\xi + \int_{\R^N} V(x) |u|^2 \, dx.
$$
We suppose that $G(x,u) = \int_0^u g(x,s) \, ds$, where $g : \R^N \times \R \rightarrow \R$ satisfies:
\begin{itemize}
\item[($G1$)] $g(\cdot, u)$ is measurable and $\mathbb{Z}^N$-periodic in $x \in \R^N$, $g(x,\cdot)$ is continuous in $u \in \R$ for a.e. $x \in \R^N$;
\item[($G2$)] $g(x,u) = o(|u|)$ as $|u| \to 0^+$ uniformly in $x \in \R^N$;
\item[($G3$)] there exists $2 < r < 2_\alpha^*$ such that $\lim_{|u| \to \infty} g(x,u)/|u|^{r-1} = 0$ uniformly in $x \in \R^N$;
\item[($G4$)] for each $a<b$ there is a constant $c>0$ such that  $|g(x,u)|\leq c$ for a.e. $x\in\R^N$ and $a\leq u\leq b$.
\end{itemize} 
We will also denote
$$
\cJ_{per} (u) = \cJ (u) - \int_{\R^N} V_{loc} (x) |u|^2 \, dx.
$$

\begin{Th}\label{ThDecomposition}
Suppose that ($G1$)--($G4$) and ($V_\alpha 1$) hold. Let $(u_n)$ be a bounded Palais-Smale sequence for $\cJ$. Then passing to a subsequence of $(u_n)$, there is an integer $\ell > 0$ and sequences $(y_n^k) \subset \mathbb{Z}^N$, $w^k \in H^{\alpha / 2} (\R^N)$, $k = 1, \ldots, \ell$ such that:
\begin{itemize}
\item[$(a)$] $u_n \rightharpoonup u_0$ and $\cJ' (u_0) = 0$;
\item[$(b)$] $|y_n^k| \to \infty$ and $|y_n^k - y_n^{k'}| \to \infty$ for $k \neq k'$;
\item[$(c)$] $w^k \neq 0$ and $\cJ_{per}'(w^k) = 0$ for each $1 \leq k \leq \ell$;
\item[$(d)$] $u_n - u_0 - \sum_{k=1}^\ell w^k (\cdot - y_n^k) \to 0$ in $H^{\alpha / 2} (\R^N)$ as $n \to \infty$;
\item[$(e)$] $\cJ (u_n) \to \cJ(u_0) + \sum_{k=1}^\ell \cJ_{per} (w^k)$.
\end{itemize}
\end{Th}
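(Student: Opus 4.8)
The plan is to follow the classical concentration-compactness / iterative translation scheme of Benci--Cerami and Coti Zelati--Rabinowitz, adapted to the fractional setting exactly as in \cite{BieganowskiMederski}[Theorem 4.1]. First, since $(u_n)$ is bounded in $H^{\alpha/2}(\R^N)$, after passing to a subsequence we obtain $u_n \rightharpoonup u_0$ weakly, and a standard argument (testing $\cJ'(u_n) \to 0$ against fixed test functions, together with $(G1)$--$(G4)$ to pass to the limit in the nonlinear term via local compact embeddings $H^{\alpha/2} \hookrightarrow L^s_{loc}$) gives $\cJ'(u_0) = 0$, proving $(a)$. Set $v_n^1 := u_n - u_0$; then $v_n^1 \rightharpoonup 0$, the sequence $(v_n^1)$ is still bounded, and by the Brezis--Lieb lemma together with the corresponding splitting for the gradient term one checks that $(v_n^1)$ is itself a Palais--Smale sequence for $\cJ_{per}$ and that $\cJ(u_n) = \cJ(u_0) + \cJ_{per}(v_n^1) + o(1)$.

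Next comes the dichotomy. If $v_n^1 \to 0$ strongly, we are done with $\ell = 0$ (no bubbles). Otherwise, a fractional vanishing lemma (a Lions-type lemma in $H^{\alpha/2}$: if $\sup_{y \in \R^N}\int_{B_1(y)}|v_n|^2 \to 0$ then $v_n \to 0$ in $L^s$ for all $2 < s < 2^*_\alpha$, hence, using $(G2)$--$(G3)$, $\int G_{per}(x,v_n) \to 0$ and then $\|v_n\|^2 \to 0$ from $\cJ_{per}'(v_n)(v_n) \to 0$) forces the existence of $(y_n^1) \subset \mathbb{Z}^N$ (we may round to lattice points by periodicity of $V_{per}$ and $g$) with $|y_n^1| \to \infty$ (the divergence $|y_n^1| \to \infty$ follows because $v_n^1 \rightharpoonup 0$, so the mass cannot stay in a fixed ball) such that $v_n^1(\cdot + y_n^1) \rightharpoonup w^1 \neq 0$. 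By $\mathbb{Z}^N$-translation invariance of $\cJ_{per}$ one gets $\cJ_{per}'(w^1) = 0$, giving part of $(c)$.

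One then iterates: set $v_n^2 := v_n^1 - w^1(\cdot - y_n^1)$, which again is bounded, weakly converges to $0$, is Palais--Smale for $\cJ_{per}$, and satisfies the energy additivity $\cJ_{per}(v_n^1) = \cJ_{per}(w^1) + \cJ_{per}(v_n^2) + o(1)$ by Brezis--Lieb. If $v_n^2 \to 0$ strongly we stop with $\ell = 1$; otherwise repeat to extract $(y_n^2)$ and $w^2 \neq 0$, checking $|y_n^2| \to \infty$ and $|y_n^1 - y_n^2| \to \infty$ (the latter because otherwise $y_n^1 - y_n^2$ would be, along a subsequence, a fixed lattice vector, forcing $v_n^2$ to carry a nonzero weak limit at a bounded distance from $y_n^1$, contradicting $v_n^2(\cdot+y_n^1)\rightharpoonup 0$). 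This yields $(b)$. The process terminates after finitely many steps: each extracted $w^k$ has $\cJ_{per}(w^k) = (\tfrac12 - \tfrac1{\text{(effective power)}})\|w^k\|^2$-type lower bound — more precisely, since every nontrivial critical point of $\cJ_{per}$ lies on its Nehari manifold and, by the argument in the proof of (J1)/(J4) applied to $\cJ_{per}$, has energy bounded below by a fixed $\delta > 0$ and norm bounded below — so the additive energy identity $\cJ(u_n) = \cJ(u_0) + \sum_{k=1}^\ell \cJ_{per}(w^k) + o(1)$ together with boundedness of $\cJ(u_n)$ caps $\ell$. Letting the process stop at $\ell$ gives $v_n^{\ell+1} \to 0$, i.e. $(d)$, and the telescoped energy identity is $(e)$.

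\textbf{Main obstacle.} The delicate point is the interplay between the localized potential $V_{loc}$ and the bubbles: one must show that although $u_0$ feels the full operator $(-\Delta)^{\alpha/2} + V_{per} + V_{loc}$, each translated bubble $w^k(\cdot - y_n^k)$ only feels the \emph{periodic} operator in the limit, because $V_{loc}(\cdot + y_n^k) \to 0$ as $|y_n^k| \to \infty$ (uniformly on compacts). This requires care in passing to the limit in $\int V_{loc}(x)|v_n^k(x+y_n^k)|^2\,dx \to 0$ and in the corresponding cross terms, and is the reason $\cJ_{per}$ rather than $\cJ$ appears in $(c)$ and $(e)$; all of this hinges on $\lim_{|x|\to\infty}V_{loc}(x) = 0$ from $(V_\alpha 1)$. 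The second slightly technical point is verifying the fractional Lions vanishing lemma and the Brezis--Lieb-type splitting of the Gagliardo/Fourier seminorm $\int |\xi|^\alpha|\hat{u}|^2\,d\xi$ under weak convergence; both are known but must be invoked carefully in the nonlocal norm.
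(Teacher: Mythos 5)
Your proposal takes essentially the same route as the paper, which proves Theorem \ref{ThDecomposition} by repeating the proof of \cite{BieganowskiMederski}[Theorem 4.1] with Secchi's fractional Lions-type lemma (\cite{secchi}[Lemma 2.4]) in place of the classical one and using the $\mathbb{Z}^N$-invariance of the $H^{\alpha/2}$-norm; your iterative bubbling scheme, the treatment of $V_{loc}$ vanishing at infinity, and the Brezis--Lieb type splittings are exactly the ingredients of that argument. One small correction: since only (G1)--(G4) are assumed here (no superlinearity or monotonicity), the termination of the iteration should be deduced from the uniform lower bound $\|w^k\|\geq\rho>0$ (which follows from (G2)--(G3) and the Sobolev embedding applied to $\cJ_{per}'(w^k)(w^k)=0$) together with the splitting of the norms, rather than from a positive lower bound $\delta$ on $\cJ_{per}(w^k)$ via the Nehari-manifold/(J1)--(J4) argument, which is not available at this level of generality.
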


\begin{Rem} \label{Rem:Epsilon}
Note that (G2)--(G4) imply that for every $\varepsilon > 0$ there is $C_\varepsilon > 0$ such that
$$
|g(x,u)| \leq \varepsilon |u| + C_\varepsilon |u|^{r-1}
$$
for any $u \in \R$ and a.e. $x \in \R^N$.
\end{Rem}


The proof of Theorem \ref{ThDecomposition} is, in fact, the reformulation of the proof of \cite{BieganowskiMederski}[Theorem 4.1]. We use \cite{secchi}[Lemma 2.4] instead of the classical Lions' lemma and we observe that the classical norm on $H^{\alpha / 2} (\R^N)$
$$
\| u \|_{H^{\alpha / 2}}^2 = \int_{\R^N} |\xi|^\alpha |\hat{u}(\xi)|^2 \,d\xi + \int_{\R^N} |u(x)|^2 \,dx
$$
is $\mathbb{Z}^N$-invariant. Then the remaining arguments in proof of \cite{BieganowskiMederski}[Theorem 4.1] are the same.

\section{Existence of solutions for the sum of periodic and localized potential}
\label{sect:Existence}

\begin{Lem}
Let $\alpha \in (0,2)$ and denote by 
$$
\| u \|_{H^{\alpha / 2}}^2 = \int_{\R^N} |\xi|^\alpha |\hat{u}(\xi)|^2 \,d\xi + \int_{\R^N} |u(x)|^2 \,dx
$$
the classical norm on the fractional Sobolev space $H^{\alpha / 2} (\R^N)$. Suppose that $(V_\alpha1)$ hold. Then the following norm
$$
\| u\|^2 = \int_{\R^N} |\xi|^\alpha |\hat{u}(\xi)|^2 \,d\xi + \int_{\R^N} V(x) |u(x)|^2 \,dx
$$
is equivalent to $\| \cdot \|_{H^{\alpha / 2}}$.
\end{Lem}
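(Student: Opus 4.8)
The plan is to observe that the two norms share exactly the same top-order (fractional derivative) term $\int_{\R^N}|\xi|^\alpha|\hat u(\xi)|^2\,d\xi$, so the whole matter reduces to comparing the weighted $L^2$ term $\int_{\R^N}V(x)|u(x)|^2\,dx$ with the plain $L^2$ term $\int_{\R^N}|u(x)|^2\,dx$, and this comparison can be done pointwise a.e.

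First I would record that under $(V_\alpha1)$ with $\alpha\in(0,2)$ the potential $V=V_{per}+V_{loc}$ lies in $L^{\infty}(\R^N)$, being a sum of two bounded functions, and moreover $V_0:=\essinf_{x\in\R^N}V(x)>0$ by hypothesis. Consequently $V_0\le V(x)\le\|V\|_{L^{\infty}}$ for a.e. $x\in\R^N$, so for every $u\in H^{\alpha/2}(\R^N)$, after multiplying by $|u(x)|^2$ and integrating,
$$
V_0\int_{\R^N}|u(x)|^2\,dx\ \le\ \int_{\R^N}V(x)|u(x)|^2\,dx\ \le\ \|V\|_{L^{\infty}}\int_{\R^N}|u(x)|^2\,dx.
$$
Adding the common term $\int_{\R^N}|\xi|^\alpha|\hat u(\xi)|^2\,d\xi$ to each side yields
$$
\min\{1,V_0\}\,\|u\|_{H^{\alpha/2}}^2\ \le\ \|u\|^2\ \le\ \max\{1,\|V\|_{L^{\infty}}\}\,\|u\|_{H^{\alpha/2}}^2,
$$
which is precisely the claimed equivalence of norms.

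There is no serious obstacle here; the only points demanding a little care are that the lower bound genuinely relies on the positivity $V_0>0$ from $(V_\alpha1)$ (this is exactly the portion of $(V_\alpha1)$ that is phrased differently for $\alpha=2$, where one instead invokes $\inf\sigma(-\Delta+V)>0$ in place of pointwise positivity of $V$), and that $V\in L^{\infty}$ is what keeps the upper constant finite. As a by-product the argument shows that $E^{\alpha/2}=H^{\alpha/2}(\R^N)$ as sets, with equivalent Hilbert-space norms, so that all the continuous and compact Sobolev embeddings available on $H^{\alpha/2}(\R^N)$ transfer verbatim to $E^{\alpha/2}$.
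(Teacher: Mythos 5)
Your proof is correct and follows essentially the same route as the paper: both use the a.e.\ bounds $V_0 \le V(x) \le \|V\|_{L^\infty}$ from $(V_\alpha1)$ to sandwich the weighted $L^2$ term and then absorb the common fractional term, yielding the constants $\min\{1,V_0\}$ and $\max\{1,\|V\|_{L^\infty}\}$ exactly as in the paper. No gaps.
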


\begin{proof}
Indeed - observe that
$$
\|u\|^2 \leq \int_{\R^N} |\xi|^\alpha |\hat{u}(\xi)|^2 \,d\xi + |V|_\infty \int_{\R^N} |u(x)|^2 \,dx \leq \max\{ 1, |V|_\infty \} \| u \|_{H^{\alpha / 2}}^2 
$$
and
$$
\|u\|^2 \geq \int_{\R^N} |\xi|^\alpha |\hat{u}(\xi)|^2 \,d\xi + V_0 \int_{\R^N} |u(x)|^2 \,dx \geq \min \{1, V_0 \} \| u \|_{H^{\alpha / 2}}^2.
$$
\end{proof}

\begin{Rem}
The norm equivalence is also true for $\alpha = 2$.
\end{Rem}

The above lemma implies that $E^{\alpha / 2}$ coincides with $H^{\alpha / 2}(\R^N)$. Thus our functional $\cJ : H^{\alpha / 2}(\R^N) \rightarrow \R$ has the form
$$
\cJ(u) = \frac12 \|u\|^2 - \cI(u),
$$
where
$$
\cI(u) = \int_{\R^N} \Big(F(x,u)-\frac1q\Gamma(x)|u|^q\Big)\, dx.
$$
The Nehari manifold is given by
$$
\cN = \{ u \in H^{\alpha / 2} (\R^N) \setminus \{0\} \ : \ \|u\|^2 = \cI'(u)(u) \}.
$$

Now we are ready to prove our first result.

\begin{altproof}{Theorem \ref{ThMain1}}%

By Theorem \ref{ThSetting} we find a bounded minimizing sequence on $\cN$, i.e. sequence $(u_n) \subset \cN$ such that $\cJ(u_n) \to c := \inf_\cN \cJ > 0$ and $\cJ'(u_n) \to 0$. By Theorem \ref{ThDecomposition} we have that
$$
\cJ (u_n) \to \cJ(u_0) + \sum_{k=1}^\ell \cJ_{per} (w^k),
$$
where $w^k$ are critical points of the periodic part of the functional $\cJ$, i.e. critical points of
$$
\cJ_{per} (u) = \cJ (u) - \int_{\R^N} V_{loc}(x) u^2 \, dx.
$$
Suppose that $V_{loc} \equiv 0$, i.e. $\cJ = \cJ_{per}$. If $u_0 = 0$, we have
$$
c \leftarrow \cJ (u_n) \to \sum_{k=1}^\ell \cJ_{per} (w^k) \geq \ell c
$$
and therefore $\ell = 1$ and $w^1 \neq 0$ is a ground state. If $u_0 \neq 0$ we have
$$
c \leftarrow \cJ (u_n) \to \cJ (u_0) + \sum_{k=1}^\ell \cJ_{per} (w^k) \geq (\ell+1) c
$$
and therefore $\ell = 0$ and $\cJ (u_n) \to \cJ (u_0) = c$, so $u_0$ is a ground state.

Suppose that $V_{loc} (x) < 0$ for a.e. $x \in \R^N$. Then $\inf_{\cN_{per}} \cJ_{per} = c_{per} > c$, where
$$
\cN_{per} = \left\{ u \neq 0 \ : \ \cJ_{per}' (u)(u) = 0 \right\}.
$$ 
Suppose that $u_0 = 0$. Therefore
$$
c \leftarrow \cJ (u_n) \to \sum_{k=1}^\ell \cJ_{per} (w^k) \geq \ell c_{per} > \ell c
$$
and $\ell = 0$. Thus $\cJ (u_n) \to 0 = c$ -- a contradiction. Therefore $u_0 \neq 0$ and observe that
$$
c \leftarrow \cJ (u_n) \to \cJ(u_0) + \sum_{k=1}^\ell \cJ_{per} (w^k) \geq c + \ell c_{per},
$$
and $\ell = 0$. It means that $\cJ (u_n) \to \cJ (u_0) = c$ and $u_0$ is a ground state.

\end{altproof}

\section{Multiplicity result for the periodic potential}
\label{sect:Multiplicity}

Put $c = \inf_{\cN} \cJ > 0$ and $\beta = \inf_{\cN} \|u\| > 0$. Theorem \ref{ThMain1} provides that $c$ is attained at some function in $\cN$. By $\tau_k$ we denote the $\mathbb{Z}^N$-action on $H^{\alpha / 2} (\R^N)$, i.e.
$$
\tau_k u = u(\cdot - k).
$$
Obviously, $\tau_k \tau_{-k} u = u$.

\begin{Lem}\label{Lem:ScalarProductInvariance}
There holds
$$
\langle \tau_k u, v \rangle = \langle u, \tau_{-k} v \rangle
$$
for every $u,v \in H^{\alpha / 2} (\R^N)$ and $k \in \mathbb{Z}^N$.
\end{Lem}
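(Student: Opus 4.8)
The plan is to verify the identity term by term using the two pieces of the scalar product $\langle u,v\rangle = \int_{\R^N}|\xi|^\alpha \hat{u}(\xi)\overline{\hat{v}(\xi)}\,d\xi + \int_{\R^N} V(x)u(x)v(x)\,dx$. For the potential term, first I would observe that $\tau_k u(x) = u(x-k)$, so by the substitution $x \mapsto x+k$ one gets $\int_{\R^N} V(x)(\tau_k u)(x)v(x)\,dx = \int_{\R^N} V(x+k)u(x)v(x+k)\,dx$. Since $V = V_{per} + V_{loc}$ with $V_{loc}\equiv 0$ in the multiplicity setting (Theorem \ref{ThMultiplicity}), we have $V = V_{per}$ which is $\mathbb{Z}^N$-periodic, so $V(x+k) = V(x)$ for $k\in\mathbb{Z}^N$; hence the integral equals $\int_{\R^N} V(x)u(x)v(x+k)\,dx = \int_{\R^N}V(x)u(x)(\tau_{-k}v)(x)\,dx$, which is the potential part of $\langle u, \tau_{-k}v\rangle$.

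For the fractional-gradient term I would use the behaviour of the Fourier transform under translation: $\widehat{\tau_k u}(\xi) = e^{-2\pi i \xi\cdot k}\hat{u}(\xi)$ (with the normalization fixed in the paper). Then
\[
\int_{\R^N}|\xi|^\alpha \widehat{\tau_k u}(\xi)\overline{\hat{v}(\xi)}\,d\xi = \int_{\R^N}|\xi|^\alpha e^{-2\pi i\xi\cdot k}\hat{u}(\xi)\overline{\hat{v}(\xi)}\,d\xi,
\]
while
\[
\int_{\R^N}|\xi|^\alpha \hat{u}(\xi)\overline{\widehat{\tau_{-k}v}(\xi)}\,d\xi = \int_{\R^N}|\xi|^\alpha \hat{u}(\xi)\overline{e^{2\pi i\xi\cdot k}\hat{v}(\xi)}\,d\xi = \int_{\R^N}|\xi|^\alpha e^{-2\pi i\xi\cdot k}\hat{u}(\xi)\overline{\hat{v}(\xi)}\,d\xi,
\]
so the two expressions coincide. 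Adding the two parts gives $\langle \tau_k u, v\rangle = \langle u,\tau_{-k}v\rangle$. For $\alpha = 2$ the same computation works with $\int_{\R^N}\nabla u\cdot\nabla v\,dx$ in place of the Fourier term, using that the gradient commutes with translation and the change of variables $x\mapsto x+k$.

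There is no real obstacle here: the only points requiring a word of care are (i) that all integrals are finite, which follows since $u,v\in H^{\alpha/2}(\R^N) = E^{\alpha/2}$ by the norm-equivalence lemma and translation is an isometry of $H^{\alpha/2}(\R^N)$, and (ii) making sure the phase factors from the translation cancel correctly rather than reinforce — this is exactly why the action on one side is $\tau_k$ and on the other $\tau_{-k}$. I would present the argument as the two short displayed computations above plus one sentence invoking periodicity of $V_{per}$, and nothing more.
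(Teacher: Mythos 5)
Your proposal is correct and follows essentially the same route as the paper: the Fourier translation identity $\widehat{\tau_k u}(\xi)=e^{-2\pi i\xi\cdot k}\hat{u}(\xi)$ handles the $|\xi|^\alpha$ term, and a change of variables together with the $\mathbb{Z}^N$-periodicity of $V$ (here $V=V_{per}$ since $V_{loc}\equiv 0$) handles the potential term, with the $\alpha=2$ case treated separately as trivial. Your explicit remark that periodicity of $V$ is what justifies the substitution is a welcome clarification of a step the paper leaves implicit.
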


\begin{proof}
For $\alpha = 2$ the observation is trivial. Let $\alpha < 2$. Note that
$$
\mathcal{F} (\tau_k u) (\xi) = e^{-2\pi i \xi \cdot k} \mathcal{F} ( u) (\xi)
$$
for every $k \in \Z^N$ and $\xi \in \R^N$.
Thus
$$
\mathcal{F}(\tau_k u) \cdot \overline{\mathcal{F}(v)} = e^{-2\pi i \xi \cdot k} \mathcal{F} ( u) \overline{\mathcal{F}(v)} = \mathcal{F}(u) \cdot \overline{ e^{-2\pi i \xi \cdot (-k)} \mathcal{F}(v)} = \mathcal{F}(u) \cdot \overline{\mathcal{F}(\tau_{-k} v)}.
$$
Therefore
$$
\int_{\R^N} |\xi|^\alpha \widehat{\tau_k u} (\xi) \overline{\hat{v} (\xi)} \, d\xi = \int_{\R^N} |\xi|^\alpha \hat{u} (\xi) \overline{\widehat{\tau_{-k} v} (\xi)} \, d\xi.
$$
Obviously, using a change of variables $x \mapsto x + k$
$$
\int_{\R^N} V(x) (\tau_k u) v \, dx = \int_{\R^N} V(x) u (\tau_k v) \, dx
$$ 
and we conclude.
\end{proof}

\begin{Rem}
For given $k \in \mathbb{Z}^N$, let us consider $\tau_k$ as an operator $\tau_k : H^{\alpha / 2} (\R^N) \rightarrow H^{\alpha / 2} (\R^N)$. Then obviously $\tau_k$ is linear. Moreover
$$
\|\tau_k u\| = \|u\|,
$$
thus $\tau_k$ is a bounded operator and $\|\tau_k\| = 1$. Thus we may consider an adjoint operator $\tau_k^* : H^{\alpha / 2} (\R^N) \rightarrow H^{\alpha / 2} (\R^N)$. The above lemma says that
$$
\tau_k^* = \tau_{-k}.
$$
Moreover $\tau_k$ is an isomorphism and $\tau_k^{-1} = \tau_{-k} = \tau_k^*$. Thus $\tau_k$ is an orthogonal operator.
\end{Rem}

\begin{Lem}
Let $\alpha \in (0,2]$. The functional $\cJ$ is $\mathbb{Z}^N$-invariant.
\end{Lem}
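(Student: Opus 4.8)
The plan is to verify directly that $\cJ(\tau_k u) = \cJ(u)$ for every $u \in H^{\alpha/2}(\R^N)$ and every $k \in \mathbb{Z}^N$, by treating separately the quadratic part $\tfrac12\|\cdot\|^2$ and the nonlinear part $\cI$ of $\cJ$.

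For the quadratic part I would invoke Lemma \ref{Lem:ScalarProductInvariance} (equivalently, the Remark that follows it): taking $v = \tau_k u$ there and using $\tau_{-k}\tau_k u = u$ gives $\|\tau_k u\|^2 = \langle \tau_k u, \tau_k u\rangle = \langle u, \tau_{-k}\tau_k u\rangle = \|u\|^2$. Concretely this rests on the two facts already recorded there, namely that $\widehat{\tau_k u}(\xi) = e^{-2\pi i\xi\cdot k}\hat u(\xi)$ has the same modulus as $\hat u(\xi)$, so the Fourier part of the norm is unchanged, and that $\int_{\R^N} V(x)|u(x-k)|^2\,dx = \int_{\R^N} V(x+k)|u(x)|^2\,dx = \int_{\R^N} V(x)|u(x)|^2\,dx$ after the change of variables $x\mapsto x+k$, using that $V$ is $\mathbb{Z}^N$-periodic (recall that throughout this section $V_{loc}\equiv 0$, so $V = V_{per}$).

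For the nonlinear part I would perform the same change of variables $x \mapsto x+k$:
$$\cI(\tau_k u) = \int_{\R^N}\Big(F(x,u(x-k)) - \tfrac1q\Gamma(x)|u(x-k)|^q\Big)\,dx = \int_{\R^N}\Big(F(x+k,u(x)) - \tfrac1q\Gamma(x+k)|u(x)|^q\Big)\,dx,$$
and then use the periodicity assumptions: by (F1) $f$ is $\mathbb{Z}^N$-periodic in $x$, hence $F(x+k,u) = \int_0^u f(x+k,s)\,ds = \int_0^u f(x,s)\,ds = F(x,u)$, and by ($\Gamma$) $\Gamma(x+k) = \Gamma(x)$. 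This yields $\cI(\tau_k u) = \cI(u)$. Combining the two computations, $\cJ(\tau_k u) = \tfrac12\|\tau_k u\|^2 - \cI(\tau_k u) = \tfrac12\|u\|^2 - \cI(u) = \cJ(u)$.

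I do not expect a genuine obstacle here: the statement is a routine consequence of the periodicity of $V_{per}$, $f$ and $\Gamma$ together with the isometry property of $\tau_k$ already established. The only points to be careful about are applying the substitution $x \mapsto x+k$ consistently in every term and invoking periodicity for the right objects ($f$ and $\Gamma$ in the variable $x$); since $F$ depends on $x$ only through $f$, periodicity of $f$ transfers to $F$ automatically.
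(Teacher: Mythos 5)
Your proposal is correct and follows essentially the same route as the paper: norm invariance of $\tau_k$ via Lemma \ref{Lem:ScalarProductInvariance}, then the change of variables $x\mapsto x+k$ together with the $\mathbb{Z}^N$-periodicity of $F$ and $\Gamma$ for the nonlinear part. Your explicit remark that $V=V_{per}$ (since $V_{loc}\equiv 0$ in this section) is what makes the potential term translation-invariant is a point the paper leaves implicit, but it is the same argument.
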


\begin{proof}
Let us start with the trivial observation that if $u \in H^{\alpha / 2} (\R^N)$, and $w \in \mathcal{O}(u)$, then by Lemma \ref{Lem:ScalarProductInvariance}
$$
\| u \| = \| w\|.
$$
Indeed -- $w = \tau_k u$ for some $k \in \mathbb{Z}^N$. Then
$$
\|w\|^2 = \langle w, w \rangle = \langle \tau_k u, \tau_k u \rangle = \langle u, \tau_{-k} \tau_k u \rangle = \langle u, u \rangle = \|u\|^2.
$$
Then
$$
\cJ (w) = \frac{1}{2} \|w\|^2 - \int_{\R^N} F(x,w) \, dx + \frac{1}{q} \int_{\R^N} \Gamma(x) |w|^{q} \, dx.
$$
Changing variables in the integrals $x \mapsto x+k$ and the $\mathbb{Z}^N$-periodicity of $F$ and $\Gamma$ in $x$ gives
$$
\int_{\R^N} F(x,w) \, dx = \int_{\R^N} F(x,u) \, dx, \quad \frac{1}{q} \int_{\R^N} \Gamma(x) |w|^{q} \, dx = \frac{1}{q} \int_{\R^N} \Gamma(x) |u|^{q} \, dx.
$$
Therefore $\cJ (w) = \cJ (u)$. 

\end{proof}

\begin{Lem}\label{Lem:NehariInvariance}
$\cN$ is $\mathbb{Z}^N$-invariant.
\end{Lem}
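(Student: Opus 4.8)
The plan is to show that $\tau_k$ maps $\cN$ into itself for every $k \in \mathbb{Z}^N$; since $\tau_{-k} = \tau_k^{-1}$, applying this to $k$ and to $-k$ gives full invariance $\tau_k(\cN) = \cN$. So I fix $u \in \cN$ and $k \in \mathbb{Z}^N$ and verify the two defining conditions of $\cN$ for $w := \tau_k u$. That $w \neq 0$ is immediate: by Lemma \ref{Lem:ScalarProductInvariance} one has $\|w\| = \|\tau_k u\| = \|u\| > 0$. It remains to check the Nehari constraint $\|w\|^2 = \cI'(w)(w)$.

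For the constraint I would compute directly. Since $\cI'(w)(w) = \int_{\R^N} f(x, u(x-k))\,u(x-k)\,dx - \int_{\R^N} \Gamma(x)\,|u(x-k)|^q\,dx$, the substitution $x \mapsto x+k$ together with the $\mathbb{Z}^N$-periodicity of $f(\cdot,s)$ and of $\Gamma$ in $x$ (assumption ($\Gamma$), and (F1)) turns this into $\int_{\R^N} f(x,u)\,u\,dx - \int_{\R^N}\Gamma(x)|u|^q\,dx = \cI'(u)(u)$. Combining with $\|w\|^2 = \|u\|^2$ and $u \in \cN$ gives $\|w\|^2 = \|u\|^2 = \cI'(u)(u) = \cI'(w)(w)$, so $w \in \cN$. (Equivalently, and perhaps more cleanly, one can invoke the preceding lemma: $\cJ \circ \tau_k = \cJ$ and $\tau_k$ is a linear orthogonal isomorphism with $\tau_k^{-1} = \tau_{-k}$, so differentiating yields $\cJ'(\tau_k u)\circ \tau_k = \cJ'(u)$; evaluating at $u$ gives $\cJ'(w)(w) = \cJ'(u)(u) = 0$, which is the Nehari condition since $\cJ'(v)(v) = \|v\|^2 - \cI'(v)(v)$.)

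I do not expect a genuine obstacle here — the argument is the same change-of-variables bookkeeping already used to prove that $\cJ$ is $\mathbb{Z}^N$-invariant, plus the orthogonality of $\tau_k$ recorded in the remark after Lemma \ref{Lem:ScalarProductInvariance}. The only mild point of care is to note that invariance means $\tau_k(\cN)=\cN$ rather than just $\tau_k(\cN)\subseteq\cN$, which is handled by running the inclusion for both $k$ and $-k$. I would present the direct computation for concreteness.
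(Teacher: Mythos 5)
Your proposal is correct and follows essentially the same route as the paper: the change of variables $x \mapsto x+k$ together with the $\mathbb{Z}^N$-periodicity of $f$ and $\Gamma$, plus the norm invariance $\|\tau_k u\| = \|u\|$ from Lemma \ref{Lem:ScalarProductInvariance}, yields $\cJ'(\tau_k u)(\tau_k u) = \cJ'(u)(u) = 0$. Your extra remarks (nonvanishing of $\tau_k u$ and running the inclusion for both $k$ and $-k$ to get equality) are fine but add nothing beyond the paper's argument.
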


\begin{proof}
Suppose that $u \in \cN$. Then
$$
\cJ'(\tau_k u) (\tau_k u) = \| \tau_k u \|^2 - \int_{\R^N} f(x,\tau_k u)\tau_k u \, dx + \int_{\R^N} \Ga(x) |\tau_k u|^q \, dx.
$$
By the $\mathbb{Z}^N$-periodicity of $f$ and $\Gamma$ we have
$$
\int_{\R^N} f(x,\tau_k u)\tau_k u \, dx = \int_{\R^N} f(x, u) u \, dx, \quad \int_{\R^N} \Ga(x) |\tau_k u|^q \, dx=\int_{\R^N} \Ga(x) |u|^q \, dx.
$$
Moreover, by Lemma \ref{Lem:ScalarProductInvariance} $\| \tau_k u \| = \|u\|$ and finally $\cJ'(\tau_k u) (\tau_k u) = 0$, which shows that $\mathcal{O}(u) \subset \cN$.
\end{proof}

\begin{Rem}
Lemma \ref{Lem:ScalarProductInvariance} implies that the unit sphere $S^1$ is $\mathbb{Z}^N$-invariant.
\end{Rem}

Recall that for each $u \in H^{\alpha / 2} (\R^N)$ there is a unique number $t(u) > 0$ such that $t(u)u \in \cN$ and moreover the function $m : S^1 \rightarrow \cN$ given by $m(u)=t(u)u$ is a homeomorphism (see \cite{BieganowskiMederski}). The inverse $m^{-1} : \cN \rightarrow S^1$ is given by $m^{-1} (u) = u/\|u\|$.

\begin{Lem}\label{LemEquivariance}
Functions:
\begin{itemize}
\item $m : S^1 \rightarrow \cN$,
\item $m^{-1} : \cN \rightarrow S^1$,
\item $\nabla \cJ : H^{\alpha / 2} (\R^N) \rightarrow H^{\alpha / 2} (\R^N)$,
\item $ \nabla(\cJ \circ m) : S^1 \rightarrow H^{\alpha / 2} (\R^N)$
\end{itemize}
are $\mathbb{Z}^N$-equivariant.
\end{Lem}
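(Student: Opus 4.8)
The plan is to verify for each of the four maps $\Phi$ the equivariance identity $\Phi(\tau_k v)=\tau_k\Phi(v)$ for every $k\in\mathbb{Z}^N$, exploiting three facts already at hand: $\tau_k$ is an orthogonal operator on $H^{\alpha/2}(\R^N)$, so $\tau_k^{*}=\tau_{-k}=\tau_k^{-1}$ and $\|\tau_k u\|=\|u\|$; the sets $\cN$ and $S^1$ are $\mathbb{Z}^N$-invariant (Lemma \ref{Lem:NehariInvariance} and the remark after it); and $\cJ$ is $\mathbb{Z}^N$-invariant.

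First I would treat $m$. Fix $u\in S^1$ and $k\in\mathbb{Z}^N$. Since $\cN$ is $\mathbb{Z}^N$-invariant, $\tau_k\bigl(t(u)u\bigr)=t(u)\,\tau_k u\in\cN$ with $t(u)>0$; because $t(\tau_k u)$ is the \emph{unique} positive number with $t(\tau_k u)\,\tau_k u\in\cN$, we get $t(\tau_k u)=t(u)$, whence $m(\tau_k u)=t(\tau_k u)\,\tau_k u=\tau_k\bigl(t(u)u\bigr)=\tau_k m(u)$. The map $m^{-1}$ is immediate: using $\|\tau_k u\|=\|u\|$ (Lemma \ref{Lem:ScalarProductInvariance}), $m^{-1}(\tau_k u)=\tau_k u/\|\tau_k u\|=\tau_k\bigl(u/\|u\|\bigr)=\tau_k m^{-1}(u)$.

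Next I would handle $\nabla\cJ$. Differentiating the identity $\cJ\bigl(\tau_k(u+tv)\bigr)=\cJ(u+tv)$ at $t=0$ and using linearity of $\tau_k$ gives $\cJ'(\tau_k u)(\tau_k v)=\cJ'(u)(v)$ for all $u,v\in H^{\alpha/2}(\R^N)$, i.e. $\langle\nabla\cJ(\tau_k u),\tau_k v\rangle=\langle\nabla\cJ(u),v\rangle$. Since $\tau_k^{*}=\tau_{-k}$, the left-hand side equals $\langle\tau_{-k}\nabla\cJ(\tau_k u),v\rangle$, so $\tau_{-k}\nabla\cJ(\tau_k u)=\nabla\cJ(u)$, that is $\nabla\cJ(\tau_k u)=\tau_k\nabla\cJ(u)$. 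The argument for $\nabla(\cJ\circ m)$ is the same once one notes that $\cJ\circ m$ is $\mathbb{Z}^N$-invariant on $S^1$ (being the composition of the equivariant $m$ with the invariant $\cJ$) and that $\tau_k$ restricts to an isometry of the Hilbert submanifold $S^1$ carrying $T_uS^1=\{v:\langle u,v\rangle=0\}$ onto $T_{\tau_k u}S^1$; differentiating $(\cJ\circ m)\bigl(\tau_k(\cdot)\bigr)=(\cJ\circ m)(\cdot)$ along $S^1$ and using $\tau_k^{*}=\tau_{-k}$ on the tangent spaces yields $\nabla(\cJ\circ m)(\tau_k u)=\tau_k\nabla(\cJ\circ m)(u)$.

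The arguments are short; the only point needing care is the bookkeeping with adjoints in the last two maps — one must use the orthogonality $\tau_k^{*}=\tau_{-k}$, not merely the norm-invariance, and for $\nabla(\cJ\circ m)$ one uses that the tangent-space identification $T_uS^1=u^{\perp}$ is compatible with $\tau_k$ precisely because $\tau_k$ is orthogonal. I expect no genuine obstacle beyond this.
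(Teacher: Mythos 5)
Your proposal is correct, and for $m$ and $m^{-1}$ it coincides with the paper's proof (the paper also deduces $m(\tau_k u)=\tau_k m(u)$ from the uniqueness of $t(u)$ together with Lemma \ref{Lem:NehariInvariance}, and computes $m^{-1}(\tau_k u)$ directly using $\|\tau_k u\|=\|u\|$). Where you diverge is in the two gradient statements: the paper proves $\nabla\cJ(\tau_k u)=\tau_k\nabla\cJ(u)$ by writing out $\cJ'(\tau_k u)(v)$ explicitly (the scalar-product term via Lemma \ref{Lem:ScalarProductInvariance}, the $f$- and $\Gamma$-terms via a change of variables) and proves the equivariance of $\nabla(\cJ\circ m)$ using the abstract-setting identity $(\cJ\circ m)'(u)(z)=\|m(u)\|\,\cJ'(m(u))(z)$ together with the already established equivariance of $m$ and $\nabla\cJ$; you instead differentiate the invariance identities $\cJ\circ\tau_k=\cJ$ and $(\cJ\circ m)\circ\tau_k=\cJ\circ m$ (the latter following from the equivariance of $m$ and invariance of $\cJ$) and then convert $\cJ'(\tau_k u)(\tau_k v)=\cJ'(u)(v)$ into the gradient statement via $\tau_k^{*}=\tau_{-k}$. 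Both routes rest on the same orthogonality of $\tau_k$; yours is a bit more economical, since it recycles the invariance lemma and never needs the explicit integral form of $\cJ'$ nor the formula for $(\cJ\circ m)'$, at the modest price of invoking the chain rule on the sphere and checking, as you do, that $\tau_k$ maps $T_uS^1$ onto $T_{\tau_k u}S^1$ so that the tangential (Riemannian) gradient transforms correctly. The paper's computation, by contrast, is self-contained at the level of formulas and makes the role of the nonlinear terms visible, which is convenient when the abstract identity for $(\cJ\circ m)'$ is anyway quoted from the cited setting.
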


\begin{proof} $ $ 
\begin{itemize}
\item \textbf{Equivariance of} $m$. \\
Take $u \in S^1$, since $\|u\| = \|\tau_k u\|$, we have $\tau_k u \in S^1$. There is unique number $t=t(u)>0$ such that $m(u) = t(u)u \in \cN$. We claim that $t(u) \tau_k u \in \cN$. Indeed
$$
t(u) \tau_k u = \tau_k \left( t(u)u \right) = \tau_k m(u) \in \cN,
$$
by Lemma \ref{Lem:NehariInvariance}. Thus 
$$
m(\tau_k u) = t(u) \tau_k u = \tau_k m(u).
$$
\item \textbf{Equivariance of} $m^{-1}$. \\
Let $u \in \cN$. By Lemma \ref{Lem:NehariInvariance} we have that $\tau_k u \in \cN$. Observe that
$$
m^{-1} (\tau_k u) = \frac{\tau_k u}{\| \tau_k u\|} = \frac{\tau_k u}{\|u\|} = \tau_k \left( \frac{u}{\|u\|} \right) = \tau_k m^{-1} (u).
$$
\item \textbf{Equivariance of} $\nabla \cJ$. \\ Take $u,v \in H^{\alpha / 2} (\R^N)$. Then
\begin{eqnarray*}
& & \langle \nabla \cJ ( \tau_k u), v \rangle = \cJ ' (\tau_k u) (v) \\
&=& \langle \tau_k u, v \rangle - \int_{\R^N} f(x,\tau_k u) v \, dx + \int_{\R^N} \Gamma(x) |\tau_k u|^{q-2} (\tau_k u) v \, dx
  \\
&=& \langle u, \tau_{-k} v \rangle - \int_{\R^N} f(x,u) \tau_{-k} v \, dx + \int_{\R^N} \Gamma(x) |u|^{q-2} u (\tau_{-k}v) \, dx  \\
&=& \langle \nabla \cJ (u), \tau_{-k}v \rangle = \langle \tau_k \nabla \cJ (u), v \rangle
\end{eqnarray*}
Therefore $\nabla \cJ (\tau_k u) = \tau_k \nabla \cJ (u)$ for every $u \in H^{\alpha /2}(\R^N)$.
\item \textbf{Equivariance of} $\nabla(\cJ \circ m)$. \\
For $u \in S^1$ and $z \in T_u S^1$, by the abstract setting in \cite{BieganowskiMederski} we know that
$$
(\cJ \circ m)'(u)(z) = \| m(u) \| \cJ'(m(u))(z).
$$
So take $u \in S^1$ and $z \in T_{\tau_k u} S^1$. Then $\tau_{-k} z \in T_u S^1$. Therefore
\begin{eqnarray*}
& & \langle \nabla (\cJ \circ m) (\tau_k u), z \rangle = (\cJ \circ m)'(\tau_k u)(z) = \| m(\tau_k u) \| \cJ'(m(\tau_k u))(z) \\
&=& \|\tau_k m(u) \| \cJ' (\tau_k (m(u))(z) = \| m(u) \| \langle \nabla \cJ (\tau_k (m(u)), z \rangle \\
&=& \|m(u)\| \langle \tau_k \nabla \cJ (m(u)), z \rangle = \|m(u)\| \langle \nabla \cJ (m(u)), \tau_{-k} z \rangle \\
&=& (\cJ \circ m)'(u)(\tau_{-k} z) = \langle \nabla (\cJ \circ m) (u), \tau_{-k} z \rangle = \langle \tau_{k} \nabla (\cJ \circ m) (u),  z \rangle.
\end{eqnarray*}
Thus $\tau_{k} \nabla (\cJ \circ m) (u) = \nabla (\cJ \circ m) (\tau_k u)$ for every $u \in S^1$.
\end{itemize}

\end{proof}

\begin{Lem}
Function $m^{-1} : \cN \to S^1$ is Lipschitz continuous.
\end{Lem}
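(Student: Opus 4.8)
The statement to prove is that $m^{-1}:\cN\to S^1$ is Lipschitz continuous. Since we already know $m^{-1}(u)=u/\|u\|$, this is really the claim that the radial projection $u\mapsto u/\|u\|$, restricted to $\cN$, is Lipschitz. The key geometric input is that $\cN$ is bounded away from the origin: by the quantity $\beta=\inf_{\cN}\|u\|>0$ introduced at the start of this section, every $u\in\cN$ satisfies $\|u\|\geq\beta$. Outside a ball of radius $\beta$ the map $u\mapsto u/\|u\|$ is globally Lipschitz on a Hilbert space, so the result follows.

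\begin{altproof}{the Lemma}
Recall $\beta=\inf_{\cN}\|u\|>0$, so $\|u\|\geq\beta$ for every $u\in\cN$, and recall that $m^{-1}(u)=u/\|u\|$. Let $u,v\in\cN$. Then
$$
m^{-1}(u)-m^{-1}(v)=\frac{u}{\|u\|}-\frac{v}{\|v\|}
=\frac{u-v}{\|u\|}+\left(\frac{1}{\|u\|}-\frac{1}{\|v\|}\right)v
=\frac{u-v}{\|u\|}+\frac{\|v\|-\|u\|}{\|u\|\,\|v\|}\,v.
$$
Taking norms and using the triangle inequality together with $\big|\,\|v\|-\|u\|\,\big|\leq\|u-v\|$ and $\|v\|/(\|u\|\,\|v\|)=1/\|u\|$, we get
$$
\|m^{-1}(u)-m^{-1}(v)\|\leq\frac{\|u-v\|}{\|u\|}+\frac{\big|\,\|v\|-\|u\|\,\big|}{\|u\|}
\leq\frac{2}{\|u\|}\,\|u-v\|\leq\frac{2}{\beta}\,\|u-v\|.
$$
Hence $m^{-1}$ is Lipschitz continuous with constant $2/\beta$.
\end{altproof}

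**Remarks on difficulty.** There is essentially no obstacle here: the only nontrivial fact used, $\beta>0$, has already been recorded, and the rest is the standard estimate for the radial retraction onto the unit sphere in a normed space. One could also phrase it slightly differently — e.g. first bound $\big|\,\|u\|-\|v\|\,\big|$ and then handle the two terms — but the computation above is the cleanest. If one wanted to avoid invoking $\beta$ explicitly one would still need some lower bound on $\|u\|$ over $\cN$, which is exactly what (J1) of Theorem \ref{ThSetting} provides (the Nehari manifold lies outside the sphere of radius $r$), so the argument is robust to how one chooses to package that lower bound.
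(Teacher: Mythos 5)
Your proof is correct and follows essentially the same route as the paper: the same decomposition $\frac{u}{\|u\|}-\frac{v}{\|v\|}=\frac{u-v}{\|u\|}+\frac{(\|v\|-\|u\|)v}{\|u\|\,\|v\|}$, the same triangle-inequality estimate, and the same Lipschitz constant $2/\beta$ obtained from $\beta=\inf_{\cN}\|u\|>0$. Nothing to correct.
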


\begin{proof}
Let $u,v \in \cN$. Observe that
\begin{eqnarray*}
\| m^{-1} (u) - m^{-1} (v) \| &=& \left\| \frac{u-v}{\|u\|} + \frac{v \|v\|-v\|u\|}{\|u\| \cdot \|v\|} \right\| = \left\| \frac{u-v}{\|u\|} + \frac{v (\|v\|-\|u\|)}{\|u\| \cdot \|v\|} \right\| \leq \\
&\leq& \frac{\|u-v\|}{\|u\|} + \frac{\left| \|v\| - \|u\| \right|}{\|u\|} \leq \frac{2\|u-v\|}{\|u\|} \leq \frac{2}{\beta} \|u-v\|.
\end{eqnarray*}
\end{proof}

To show Theorem \ref{ThMultiplicity} we employ the method introduced by A. Szulkin and T. Weth in \cite{SzulkinWeth}. Put $\mathscr{C} = \{ u \in S^1 \ : \ (\mathcal{J} \circ m)'(u) = 0 \}$. Choose a set $\mathcal{F} \subset \mathscr{C}$ such that $\mathcal{F} = - \mathcal{F}$ and for each orbit $\mathcal{O}(w)$ there is unique representative $v \in \mathcal{F}$. To show Theorem \ref{ThMain2} we need to show that $\mathcal{F}$ is infinite. Suppose by a contradiction that
$$
\mathcal{F} \ \mbox{is finite.}
$$
Put
$$
\kappa = \inf \{ \|v-w\| \ : \ v,w \in \mathscr{C}, v\neq w \}
$$
and note that $\kappa > 0$ -- see \cite{SzulkinWeth}[Lemma 2.13]. 



The following lemma has the crucial role in the proof of the multiplicity result and originally has been proven in \cite{SzulkinWeth}[Lemma 2.14]. Since the nonlinear term is sign-changing we need a slight modification of the proof.

\begin{Lem}\label{LemDiscreteness}
Let $d \geq c$. If $(v_n^1), (v_n^2) \subset S^1$ are two Palais-Smale sequences for $\mathcal{J} \circ m$ such that $\mathcal{J}(m(v_n^i)) \leq d$, $i=1,2$. then 
$$
\|v_n^1 - v_n^2 \| \to 0
$$
or
$$
\limsup_{n\to\infty} \|v_n^1 - v_n^2\| \geq \rho (d) > 0
$$
where $\rho(d)$ depends only on $d$, but not on the particular choise of sequences.
\end{Lem}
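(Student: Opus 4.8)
The plan is to argue by contradiction, following closely the structure of \cite{SzulkinWeth}[Lemma 2.14] but keeping track of the sign-changing term $\frac1q\Gamma(x)|u|^q$. Suppose the conclusion fails for some $d\geq c$: then there are Palais--Smale sequences $(v_n^1),(v_n^2)\subset S^1$ for $\cJ\circ m$ with $\cJ(m(v_n^i))\leq d$, but $\|v_n^1-v_n^2\|\not\to 0$ while also $\limsup_n\|v_n^1-v_n^2\|$ is not bounded below by a constant depending only on $d$; equivalently, by passing to subsequences, one can manufacture such pairs with $0<\liminf_n\|v_n^1-v_n^2\|$ arbitrarily small. Set $u_n^i=m(v_n^i)\in\cN$. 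The first step is to transfer the Palais--Smale condition from the sphere to $\cN$ and then to $H^{\alpha/2}(\R^N)$: since $(\cJ\circ m)'(v)(z)=\|m(v)\|\,\cJ'(m(v))(z)$ for $z\in T_vS^1$ (the abstract identity from \cite{BieganowskiMederski} already recalled in the proof of Lemma \ref{LemEquivariance}), and since $\|m(v_n^i)\|$ is bounded above (because $\cJ(m(v_n^i))\leq d$ together with coercivity of $\cJ$ on $\cN$, i.e. (J4)) and bounded below by $\beta>0$, we get that $(u_n^i)$ are bounded Palais--Smale sequences for $\cJ$ on $H^{\alpha/2}(\R^N)$. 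Boundedness also uses that $\|u_n^i\|\geq\beta$ so $t(v_n^i)=\|u_n^i\|$ stays away from $0$.

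The second step is to apply the decomposition theorem, Theorem \ref{ThDecomposition} (with $g(x,u)=f(x,u)-\Gamma(x)|u|^{q-2}u$, $G(x,u)=F(x,u)-\frac1q\Gamma(x)|u|^q$, which satisfies (G1)--(G4) under (F1)--(F4) and $(\Gamma)$; note $V_{loc}\equiv 0$ in the periodic setting, so $\cJ=\cJ_{per}$), to each of the two sequences. This produces, after a subsequence, weak limits $u_0^i$ with $\cJ'(u_0^i)=0$ and finitely many translated profiles $w^{i,k}(\cdot-y_n^{i,k})$, each a nonzero critical point of $\cJ$, such that $u_n^i-u_0^i-\sum_k w^{i,k}(\cdot-y_n^{i,k})\to 0$ in $H^{\alpha/2}$. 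The key point is that every piece of the decomposition — $u_0^i$ if nonzero, and each $w^{i,k}$ — is a nonzero critical point of $\cJ$, hence lies in $\cN$; therefore $m^{-1}$ of each normalized piece lies in $\mathscr C$, and any two distinct such pieces are separated by at least $\kappa>0$ in the $S^1$-metric by definition of $\kappa$ (and $\kappa>0$ by \cite{SzulkinWeth}[Lemma 2.13], which applies here since $\cF$ is assumed finite and $\cJ$ is $\Z^N$-invariant with $\mathscr C$ a union of finitely many orbits). Moreover $d\geq c$ caps the number of pieces: since $\cJ(u_n^i)\to\cJ(u_0^i)+\sum_k\cJ(w^{i,k})$ and each summand is $\geq c$ while $\cJ(u_0^i)\geq 0$, there are at most $\lfloor d/c\rfloor$ profiles, a bound depending only on $d$.

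The third and decisive step extracts $\rho(d)$. Because $\|v_n^1-v_n^2\|$ does not go to $0$, the (finitely many, by the previous paragraph) limit configurations of the two sequences cannot coincide: either the weak limits $u_0^1\neq u_0^2$, or some profile appears in one decomposition and not the other, or the same profile appears with translation sequences whose difference stays bounded in one case and diverges in the other. In each case, after passing to a further subsequence and composing with suitable translations $\tau_{k_n}$ (which are isometries of $H^{\alpha/2}$ leaving $\cN$, $S^1$ and $\cJ$ invariant, by Lemmas \ref{Lem:ScalarProductInvariance}, \ref{Lem:NehariInvariance} and the invariance of $\cJ$), one of the normalized limit pieces of $v_n^1$ and one of $v_n^2$ become two \emph{distinct} points of $\mathscr C$; hence $\liminf_n\|v_n^1-v_n^2\|$ is controlled from below in terms of $\kappa$ and the finite number of admissible pieces — concretely, $\rho(d)$ can be taken to be $\kappa$ divided by a quantity depending only on $\lfloor d/c\rfloor$ and $\beta$. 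This contradicts the assumed smallness and proves the dichotomy. The sign-changing term enters only through checking that $g=f(x,\cdot)-\Gamma(x)|\cdot|^{q-2}\cdot$ still verifies (G1)--(G4) (so Theorem \ref{ThDecomposition} is applicable) and through the bound $\|m(v_n^i)\|\leq$ const, which relies on (J4) rather than on any monotonicity; everything else is formally identical to \cite{SzulkinWeth}. The main obstacle is the bookkeeping in the third step: matching profiles between the two sequences, handling the case where a profile is shared but with translation sequences at bounded distance, and extracting from that configuration a clean lower bound that is uniform in the choice of sequences — this is where one must be careful to re-center correctly and to use that $\mathscr C$ is $\kappa$-separated.
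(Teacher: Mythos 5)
Your overall strategy (transfer the Palais--Smale property to $\cN$, decompose each sequence into finitely many translated critical-point profiles via Theorem \ref{ThDecomposition}, then use the $\kappa$-separation of $\mathscr{C}$ to separate the two limit configurations) is a plausible route, but the decisive step is missing: your ``third step'' is precisely the content of the lemma, and you only assert it. Concretely: (i) distinctness of one pair of pieces does not by itself bound $\liminf_n\|v_n^1-v_n^2\|$ from below, because the quantity to be estimated is the distance between the \emph{normalized} multi-bump sequences $u_n^i/\|u_n^i\|$; a lower bound on $\liminf_n\|u_n^1-u_n^2\|$ cannot be transferred via the Lipschitz continuity of $m^{-1}$ (that inequality goes the wrong way), so one must argue directly on $v_n^i$, e.g.\ by translating, passing to weak limits and using weak lower semicontinuity of the norm together with the bounds $\beta\le\|u_n^i\|\le\nu(d)$ -- this is exactly where the dependence of $\rho(d)$ on $d$ enters, and your claimed form ``$\kappa$ divided by a quantity depending on $\lfloor d/c\rfloor$ and $\beta$'' is not derived; (ii) the problematic configurations you yourself flag (a profile shared by both decompositions but with relative translations that are bounded and nonzero, a profile present in one decomposition and absent in the other, weak limits $u_0^1\neq u_0^2$ with possibly one of them zero) each require a separate quantitative computation, and none is carried out; (iii) the contradiction framing adds nothing, since the contradiction could only come from the uniform lower bound you have not established. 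As written, the proposal reduces the lemma to ``careful bookkeeping'' without doing it, so it does not constitute a proof.

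For comparison, the paper's argument is much lighter and avoids the profile decomposition entirely. It sets $u_n^i=m(v_n^i)$ and splits according to whether $|u_n^1-u_n^2|_p\to 0$ or not. In the first case it estimates $\|u_n^1-u_n^2\|^2$ using $\cJ'(u_n^1)-\cJ'(u_n^2)$, the growth conditions (F1)--(F2), and -- this is the only place the sign-changing term matters -- the interpolation $|u_n^1-u_n^2|_q^q\le |u_n^1-u_n^2|_2^{\theta q}|u_n^1-u_n^2|_p^{(1-\theta)q}$ with $2<q<p$, concluding $\|u_n^1-u_n^2\|\to 0$ and then $\|v_n^1-v_n^2\|\to 0$ by the Lipschitz continuity of $m^{-1}$. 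In the second case a single application of Lions' lemma to the difference $u_n^1-u_n^2$, translation invariance, weak limits $u^1\neq u^2$ with $\cJ'(u^i)=0$, and weak lower semicontinuity give
$\liminf_n\|v_n^1-v_n^2\|\ge\|u^1/\alpha^1-u^2/\alpha^2\|$, which is bounded below by $\beta\kappa/\nu(d)$ (both limits nonzero, using the $\kappa$-separation) or by $\beta/\nu(d)$ (one limit zero), yielding an explicit $\rho(d)$. If you want to salvage your decomposition-based route, you would have to supply the matching argument and the normalized lower bounds of point (i)--(ii) above; the paper shows this machinery is not needed.
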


\begin{proof}
We have that $u_n^i = m(v_n^i)$, $i=1,2$, are Palais-Smale sequences for $\cJ$. Moreover they are bounded in $H^{\alpha / 2}(\R^N)$, since $\cJ$ is coercive on $\cN$. Therefore $(u_n^1)$ and $(u_n^2)$ are bounded in $L^2 (\R^N)$, say $|u_n^1|_2 + |u_n^2|_2 \leq M$ for some $M>0$.

\begin{itemize}
\item \textbf{Case 1:} Assume that $|u_n^1 - u_n^2 |_p \to 0$. Fix $\varepsilon > 0$. Then, by (F1), (F2) we have
\begin{eqnarray*}
\| u_n^1 - u_n^2\|^2 &=& \mathcal{J}'(u_n^1)(u_n^1-u_n^2) - \mathcal{J}'(u_n^2)(u_n^1-u_n^2)  \\ 
&+& \int_{\R^N} \left[ f(x,u_n^1) - f(x,u_n^2) \right] (u_n^1 - u_n^2) \, dx \\
&-& \int_{\R^N} \Gamma(x) \left[ |u_n^1|^{q-2} u_n^1 - |u_n^2|^{q-2} u_n^2 \right] (u_n^1-u_n^2) \, dx \\
&\leq& \varepsilon \| u_n^1 - u_n^2\| \\
&+& \int_{\R^N} \left[ \varepsilon (|u_n^1| + |u_n^2|) + C_\varepsilon (|u_n^1|^{p-1} + |u_n^2|^{p-1}) \right] |u_n^1 - u_n^2| \, dx \\
&-& \int_{\R^N} \Gamma(x) \left[ |u_n^1|^{q-2} u_n^1 - |u_n^2|^{q-2} u_n^2 \right] (u_n^1-u_n^2) \, dx \\
&\leq& (1+C_0) \varepsilon \|u_n^1 - u_n^2\| + D_\varepsilon |u_n^1 - u_n^2|_p + C_1 |\Ga|_\infty |u_n^1-u_n^2|_q^q
\end{eqnarray*}
for each $n \geq n_\varepsilon$ and some constants $C_0, C_1, D_\varepsilon > 0$. By our assumption we have that 
$$
D_\varepsilon |u_n^1 - u_n^2|_p \to 0.
$$
Observe that
$$
C_1 |\Ga|_\infty |u_n^1 - u_n^2|_q^q \leq C_1 |\Ga|_\infty |u_n^1 - u_n^2|_2^{\theta q} |u_n^1-u_n^2|_p^{(1-\theta)q},
$$
where $\theta \in (0,1)$ is such that
$$
\frac{1}{q} = \frac{\theta }{2} + \frac{1-\theta}{p}.
$$
Thus 
$$
C_1 |\Ga|_\infty |u_n^1 - u_n^2|_q^q \leq C_1 |\Ga|_\infty M^{\theta q} |u_n^1-u_n^2|_p^{(1-\theta)q} \to 0.
$$
Finally
\begin{eqnarray*}
\limsup_{n\to\infty} \| u_n^1 - u_n^2\|^2 &\leq& \limsup_{n\to\infty} (1+C_0) \varepsilon \|u_n^1 - u_n^2\| \\
&+& \limsup_{n\to\infty} D_\varepsilon |u_n^1 - u_n^2|_p \\
 &+& \limsup_{n\to\infty} C_1 |\Ga|_\infty |u_n^1-u_n^2|_q^q \\
 &=& (1+C_0) \varepsilon \limsup_{n\to\infty}  \|u_n^1 - u_n^2\|
\end{eqnarray*}
for every $\varepsilon > 0$. Therefore $\lim_{n\to\infty} \|u_n^1 - u_n^2\| = 0$. Finally
$$
\| v_n^1 - v_n^2 \| = \| m^{-1} (u_n^1) - m^{-1} (u_n^2) \| \leq L \| u_n^1 - u_n^2 \| \to 0,
$$
where $L > 0$ is a Lipschitz constant for $m^{-1}$.

\item \textbf{Case 2:} Assume that $|u_n^1 - u_n^2|_p \not\to 0$. By the Lions lemma there are $y_n \in \R^N$ such that
$$
\int_{B(y_n, 1)} |u_n^1 - u_n^2|^2 \, dx = \max_{y \in \R^N} \int_{B(y, 1)} |u_n^1 - u_n^2|^2 \, dx \geq \varepsilon
$$
for some $\varepsilon > 0$. In view of Lemma \ref{LemEquivariance} we can assume that $(y_n)$ is bounded. Therefore, up to a subsequence we have
$$
u_n^1 \rightharpoonup u^1, \quad u_n^2 \rightharpoonup u^2
$$
where $u^1 \neq u^2$ and $\mathcal{J}'(u^1) = \mathcal{J}'(u^2) = 0$, and
$$
\| u_n^1 \| \to \alpha^1, \quad \| u_n^2 \| \to \alpha^2,
$$
where $\beta \leq \alpha^i \leq \nu (d) = \sup \{ \|u\| \ : \ u \in \cN, \ \mathcal{J}(u) \leq d \}$, $i=1,2$. Suppose that $u^1 \neq 0$ and $u^2 \neq 0$. Therefore $u^i \in \cN$ for $i=1,2$. Moreover
$$
v^i = m^{-1} (u^i) \in S^1, \quad i=1,2
$$
and $v^1 \neq v^2$. Then
$$
\liminf_{n\to\infty} \|v_n^1 - v_n^2\| = \liminf_{n\to\infty} \left\| \frac{u_n^1}{\| u_n^1\|} - \frac{u_n^2}{\| u_n^2\|} \right\| \geq \left\| \frac{u^1}{\alpha^1} - \frac{u^2}{\alpha^2} \right\| = \| \beta_1 v_1 - \beta_2 v_2 \|,
$$
where $\beta_i = \frac{\|u^i\|}{\alpha^i} \geq \frac{\beta}{\nu(d)}$, $i=1,2$. Of course $\|v^1\|=\|v^2\|=1$. Therefore
$$
\liminf_{n\to\infty} \|v_n^1 - v_n^2\| \geq \| \beta_1 v^1 - \beta_2 v^2\| \geq \min_{i=1,2} \{ \beta_i \} \|v^1 - v^2\| \geq \frac{\beta \kappa}{\nu(d)}.
$$
If $u^2 = 0$, then $u^1 \neq u^2 = 0$. Therefore
$$
\liminf_{n\to\infty} \|v_n^1 - v_n^2\| = \liminf_{n\to\infty} \left\| \frac{u_n^1}{\| u_n^1\|} - \frac{u_n^2}{\| u_n^2\|} \right\| \geq \left\| \frac{u^1}{ \alpha^1} - \frac{u^2}{\alpha^2} \right\| =  \left\| \frac{u^1}{ \alpha^1} \right\| \geq \frac{\beta}{\nu(d)}.
$$
The case $u^1=0$ is similar, the proof is completed.
\end{itemize}

\end{proof}

\begin{altproof}{Theorem \ref{ThMultiplicity}}
The unit sphere $S^1 \subset H^{\alpha / 2} (\R^N)$ is a Finsler $C^{1,1}$-manifold and by \cite{Struwe}[Lemma II.3.9], $\cJ \circ m : S^1 \rightarrow \R$ admits a pseudo-gradient vector field. The obtained discreteness of Palais-Smale sequences (Lemma \ref{LemDiscreteness}) allows us to repeat the proof of Lemma 2.15, Lemma 2.16 and Theorem 1.2 from \cite{SzulkinWeth} in our case. We show that for every $k \in \mathbb{N}$ there is $u \in S^1$ such that $(\cJ \circ m)'(u) = 0$ and $\mathcal{J} (m(u)) = c_k$, where 
$$
c_k = \inf \{ d \in \mathbb{R} \ : \ \gamma \left( \{ v \in S^1 \ : \ \mathcal{J}(m(v)) \leq d \} \right) \geq k \}
$$
is the Lusternik-Schnirelmann value and $\gamma$ denotes the Krasnoselskii genus (see \cite{Struwe}). Moreover $c_k < c_{k+1}$, thus we get a contradiction.
\end{altproof}

\section{Existence of solutions for the coercive potential}
\label{sect:ExistenceCoercive}



We will need the following form of the Sobolev-Gagliardo-Nirenberg inequality.

\begin{Lem}[Proposition II.3 in \cite{secchi}]\label{LemSGN}
Let $r > 1$. Then there is a positive constant $C > 0$, such that for every function $u \in H^{\alpha / 2} (\R^N)$ there holds
$$
|u|_{r+1}^{r+1} \leq C \| u\|_{H^{\alpha / 2}}^{\frac{(r-1)N}{\alpha}} |u|_2^{r+1-\frac{(r-1)N}{\alpha}}.
$$
\end{Lem}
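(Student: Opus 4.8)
The plan is to obtain this as a consequence of the fractional Sobolev embedding combined with Hölder interpolation of Lebesgue norms — in effect Secchi's argument. Note that the estimate is meaningful only for exponents in the subcritical range $2 \le r+1 \le 2^*_\alpha$ (otherwise a function $u \in H^{\alpha/2}(\R^N)$ need not belong to $L^{r+1}(\R^N)$ at all), and this is the only range used elsewhere in the paper, so I restrict to it.

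First I would recall that, since $N > \alpha$, the embedding $H^{\alpha/2}(\R^N) \hookrightarrow L^{2^*_\alpha}(\R^N)$ holds with $2^*_\alpha = \frac{2N}{N-\alpha}$: there is $S = S(N,\alpha) > 0$ with
$$
|u|_{2^*_\alpha}^2 \le S \int_{\R^N} |\xi|^\alpha |\hat u(\xi)|^2\, d\xi \le S\,\|u\|_{H^{\alpha/2}}^2 \qquad \text{for all } u \in H^{\alpha/2}(\R^N)
$$
(for $0<\alpha<2$ see \cite{DiNezza}; for $\alpha=2$ this is the classical Sobolev inequality, with $\int_{\R^N}|\xi|^\alpha|\hat u|^2 = \int_{\R^N}|\nabla u|^2$ by Plancherel). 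Next, for $2 \le r+1 \le 2^*_\alpha$ I would choose $\lambda \in [0,1]$ by
$$
\frac{1}{r+1} = \frac{\lambda}{2} + \frac{1-\lambda}{2^*_\alpha},
$$
and a one-line computation gives $1-\lambda = \frac{(r-1)N}{\alpha(r+1)}$, which indeed lies in $[0,1]$ precisely because $1 \le r \le \frac{N+\alpha}{N-\alpha}$. Hölder's inequality (interpolation of $L^p$-norms) then yields $|u|_{r+1} \le |u|_2^{\lambda}\,|u|_{2^*_\alpha}^{1-\lambda}$, and inserting the Sobolev inequality gives
$$
|u|_{r+1} \le S^{(1-\lambda)/2}\,|u|_2^{\lambda}\,\|u\|_{H^{\alpha/2}}^{1-\lambda}.
$$
Raising this to the power $r+1$ produces $|u|_{r+1}^{r+1} \le C\,\|u\|_{H^{\alpha/2}}^{(1-\lambda)(r+1)}\,|u|_2^{\lambda(r+1)}$ with $C = S^{(1-\lambda)(r+1)/2}$; it then remains only to verify the exponents, and $(1-\lambda)(r+1) = \frac{(r-1)N}{\alpha}$, whence $\lambda(r+1) = (r+1) - \frac{(r-1)N}{\alpha}$, follow at once from the formula for $1-\lambda$.

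I do not expect a genuine obstacle here: the whole argument reduces to the known fractional Sobolev embedding and elementary bookkeeping of interpolation exponents, the only point requiring a bit of care being the endpoints $r+1 = 2$ (where $\lambda = 1$ and the statement is trivial) and $r+1 = 2^*_\alpha$ (where $\lambda = 0$ and the statement is just the Sobolev inequality). Equivalently, one could invoke the fractional Gagliardo--Nirenberg inequality directly, whose scaling identity $\frac{1}{r+1} = \theta\big(\tfrac12 - \tfrac{\alpha}{2N}\big) + \tfrac{1-\theta}{2}$ forces $\theta = 1-\lambda$ and hence the same exponents.
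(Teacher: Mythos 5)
Your argument is correct: the exponent bookkeeping ($1-\lambda=\frac{(r-1)N}{\alpha(r+1)}$, hence $(1-\lambda)(r+1)=\frac{(r-1)N}{\alpha}$ and $\lambda(r+1)=r+1-\frac{(r-1)N}{\alpha}$) checks out, and the chain of H\"older interpolation between $L^2$ and $L^{2^*_\alpha}$ followed by the embedding $H^{\alpha/2}(\R^N)\hookrightarrow L^{2^*_\alpha}(\R^N)$ yields exactly the stated inequality. Be aware, though, that the paper offers no proof of this lemma to compare against: it is imported verbatim from Secchi (Proposition II.3 in \cite{secchi}), so your interpolation argument is simply the standard proof underlying the cited result rather than an alternative to anything in the text. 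Your caveat about the range of $r$ is also well taken: with only the hypothesis $r>1$ the inequality needs the implicit restriction $r+1\le 2^*_\alpha$ (otherwise $u\in H^{\alpha/2}(\R^N)$ need not lie in $L^{r+1}(\R^N)$, so the left-hand side can be infinite while the right-hand side is finite), and this restriction is harmless here since the lemma is used only with the subcritical exponent $r+1=p<2^*_\alpha$ in the proof of Theorem \ref{ThMain2}.
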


In \cite{Cheng} it was proven that $E^{\alpha / 2}$ is compactly embedded into $L^{r} (\R^N)$ for $N > \alpha$ and $2 \leq r < 2^*_\alpha$. However we will show that the method introduced by S. Secchi in \cite{secchi} may be applied in this case.

\begin{altproof}{Theorem \ref{ThMain2}}

In view of Theorem \ref{ThSetting}, we obtain a bounded minimizing sequence $(u_n) \subset \cN$, i.e.
$$
\cJ(u_n) \to \inf_\cN \cJ =:c, \quad \cJ ' (u_n) \to 0.
$$
Therefore we may suppose that $u_n \rightharpoonup u_0$ in $E^{\alpha / 2}$ and $u_n \to u_0$ in $L^r_{loc} (\R^N)$ for $1 \leq r < 2^*_\alpha$. It is a classical argument that $u_0$ is a weak solution to our problem. We only need to check whether $u_0 \neq 0$. To show this, observe that for $n \geq n_0$
\begin{eqnarray*}
\frac{c}{2} &\leq& \cJ (u_n) = \cJ(u_n) - \frac{1}{2} \cJ'(u_n)(u_n) \\
&=&  \frac{1}{2} \int_{\R^N} \left( f(x, u_n) u_n - 2 F(x,u_n) \right) \, dx - \left( \frac{1}{2} - \frac{1}{q} \right) \int_{\R^N} \Ga (x) |u|^{q} \, dx \\
&\leq& \frac{1}{2} \int_{\R^N} \left( f(x, u_n) u_n - 2 F(x,u_n) \right) \, dx \leq \frac{1}{2} \int_{\R^N}  f(x, u_n) u_n \, dx  \\
&\leq& \frac{1}{2} \int_{\R^N} \left( \varepsilon |u_n|^2 + C_\varepsilon |u_n|^{p} \right) \, dx,
\end{eqnarray*}
where $n_0 \geq 1$ is large enough. Therefore
$$
\frac{c}{2} \leq \frac{\varepsilon}{2} |u_n|_2^2 + \frac{C_\varepsilon}{2} |u_n|_p^p.
$$
By the Sobolev-Gagliardo-Niremberg inequality (Lemma \ref{LemSGN}) we obtain
$$
\frac{c}{2} \leq \frac{\varepsilon}{2} |u_n|_2^2 + \frac{C \cdot C_\varepsilon}{2} \| u_n\|_{H^{\alpha / 2} (\R^N)}^{\frac{(p-2)N}{\alpha}} \cdot |u_n|_2^{p - \frac{(p-2)N}{\alpha}}.
$$
Observe that the boudedness of $(u_n)$ in $E^{\alpha / 2}$ with respect to the norm $\| \cdot \|$ implies the boundedness of $(u_n)$ in $H^{\alpha / 2} (\mathbb{R}^N)$ with respect to the classical norm $\| \cdot \|_{H^{\alpha / 2} (\R^N)}$. Therefore $\| u_n \|_{H^{\alpha / 2} (\R^N)} \leq D$ for some $D > 0$. Thus
$$
\frac{c}{2} \leq \frac{\varepsilon}{2} |u_n|_2^2 + \frac{C \cdot C_\varepsilon}{2} D^{\frac{(p-2)N}{\alpha}} \cdot |u_n|_2^{p - \frac{(p-2)N}{\alpha}}.
$$
Denote $\tilde{C}_\varepsilon = \frac{C \cdot C_\varepsilon}{2} D^{\frac{(p-2)N}{\alpha}}$. Then
$$
\frac{c}{2} \leq \frac{\varepsilon}{2} |u_n|_2^2 + \tilde{C}_\varepsilon |u_n|_2^{p - \frac{(p-2)N}{\alpha}}.
$$
Take any $\varepsilon \leq \frac{c}{2 \left( \sup_n \| u_n \| \right)^2}$. Then
$$
\frac{c}{2} \leq \frac{c |u_n|_2^2}{4 \left( \sup_n \| u_n \| \right)^2}  + \tilde{C}_\varepsilon |u_n|_2^{p - \frac{(p-2)N}{\alpha}}.
$$
Of course
$$
\frac{|u_n|_2^2}{\left( \sup_n \| u_n \| \right)^2} \leq 1
$$
and therefore
$$
\frac{c}{2} \leq \frac{c}{4}  + \tilde{C}_\varepsilon |u_n|_2^{p - \frac{(p-2)N}{\alpha}}.
$$
Finally
$$
\frac{c}{4 } \leq \tilde{C}_\varepsilon |u_n|_2^{p - \frac{(p-2)N}{\alpha}}.
$$
Therefore
$$
\ln \frac{c}{4} \leq \left( p - \frac{(p-2)N}{\alpha} \right) \ln \left( C_1 |u_n|_2 \right),
$$
where $C_1 = \tilde{C}_\varepsilon^{\frac{1}{p - \frac{(p-2)N}{\alpha}}}$. Thus
$$
\frac{\alpha}{\alpha p - (p-2)N} \ln \frac{c}{4} \leq \ln (C_1 |u_n|_2 ).
$$
And finally
$$
|u_n|_2^2 \geq \left( \frac{1}{C_1} \exp \left( \frac{\alpha}{\alpha p - (p-2)N} \ln \frac{c}{4} \right) \right)^2 =: \tilde{c} > 0.
$$
Take any $R > 0$ and observe that
$$
|u_n|_2^2 = \int_{B(0,R)} |u_n|^2 \, dx + \int_{\mathbb{R}^N \setminus B(0,R)} |u_n|^2 \, dx.
$$
Assume by a contradiction that $u_0 = 0$, i.e. $u_n \to 0$ in $L^2_{loc} (\R^N)$. Then for every $R > 0$ there is $n_0$ such that for $n \geq n_0$ we have
$$
\int_{B(0,R)} |u_n|^2 \, dx \leq \frac{\tilde{c}}{2}.
$$
Then 
$$
\int_{\mathbb{R}^N \setminus B(0,R)} |u_n|^2 \, dx \geq \frac{\tilde{c}}{2}.
$$
On the other hand
\begin{eqnarray*}
\frac{\tilde{c}}{2} &\leq& \int_{\mathbb{R}^N \setminus B(0,R)} |u_n|^2 \, dx = \int_{\mathbb{R}^N \setminus B(0,R)} \frac{V(x) |u_n|^2}{V(x)} \, dx \leq \frac{1}{\inf_{|x|\geq R} V(x)} \int_{\R^N \setminus B(0,R)} V(x) |u_n|^2 \, dx \\ &\leq& \frac{\|u_n\|^2}{\inf_{|x|\geq R} V(x)}  \leq \frac{\sup_n \|u_n\|^2}{\inf_{|x| \geq R} V(x)}.
\end{eqnarray*}
Taking $R > 0$ big enough we obtain a contradiction, since $V(x) \to \infty$ as $|x| \to \infty$. Therefore $u_0 \neq 0$, which completes the proof.

\end{altproof}

\parbox{9cm}{
\noindent \\{\sc Address of the author:}\\
Bartosz Bieganowski, bartoszb@mat.umk.pl,\\
 Nicolaus Copernicus University \\
 Faculty of Mathematics and Computer Science\\
 ul. Chopina 12/18, 87-100 Toru\'n, Poland\\
 }

\end{document}